\newtheorem{thm}{Theorem}[section]
\newtheorem{prop}{Proposition}[section]
\newtheorem{cor}{Corollary}[section]
\newtheorem{rmk}{Remark}[section]
\newtheorem{lma}{Lemma}[section]
\newtheorem*{cond}{Condition $(\star)$}
\newcommand{\E}{\mathbb{E}}
\def\N{{\rm I\kern-0.16em N}}
\def\R{{\rm I\kern-0.16em R}}
\def\E{{\rm I\kern-0.16em E}}
\def\P{{\rm I\kern-0.16em P}}
\def\F{{\rm I\kern-0.16em F}}
\def\B{{\rm I\kern-0.16em B}}
\def\C{{\rm I\kern-0.46em C}}
\def\G{{\rm I\kern-0.50em G}}
\title[On the zeros of non-analytic random periodic signals]{On the zeros of non-analytic \\ random periodic signals}
\author{J\"urgen Angst}
\email{jurgen.angst@univ-rennes1.fr}
\urladdr{http://www.angst.fr}
\thanks{This work was supported by the ANR grant UNIRANDOM, ANR-17-CE40-0008.}
\author{Guillaume Poly,} 
\email{guillaume.poly@univ-rennes1.fr} 
\urladdr{https://sites.google.com/site/guillaumejpoly/home}
 \address{Univ Rennes, CNRS, IRMAR - UMR 6625, F-35000 Rennes, France}
\begin{document}
\textcolor{white}{bla}\par
\vspace{-0.8cm}
\begin{abstract}
In this paper, we investigate the local universality of the number of zeros of a random periodic signal of the form $S_n(t)=\sum_{k=1}^n a_k f(k t)$, where $f$ is a $2\pi-$periodic function satisfying weak regularity conditions and where the coefficients $a_k$ are i.i.d. random variables, that are centered with unit variance. 
In particular, our results hold for continuous piecewise linear functions. 
We prove that the number of zeros of $S_n(t)$ in a shrinking interval of size $1/n$ converges in law as $n$ goes to infinity to the number of zeros of a Gaussian process whose explicit covariance only depends on the function $f$ and not on the common law of the random coefficients $(a_k)$. 
As a byproduct, this entails that the point measure of the zeros of $S_n(t)$ converges in law to an explicit limit on the space of locally finite point measures on $\mathbb R$ endowed with the vague topology. 
The standard tools involving the regularity or even the analyticity of $f$ to establish such kind of universality results are here replaced by some high-dimensional Berry-Esseen bounds recently obtained in \cite{chernozhukov2017central}. The latter allow us to prove functional CLT's in $C^1$ topology in situations where usual criteria can not be applied due to the lack of regularity. 
\end{abstract}

\keywords{Random signals, zeros of random function, local universality}
\subjclass{42A05, 60G50, 60G99}

\maketitle
\setcounter{tocdepth}{2}

\par
\vspace{-0.5cm}
\tableofcontents

\newpage

\section[Introduction]{Introduction and statement of the results}
The study of roots and level sets of random functions is a well established topic in probability theory with numerous connections with other domains of mathematics and mathematical physics. In univariate settings,  it is mostly focused on the wide family of random algebraic or trigonometric polynomials and the related literature is truly considerable. A recurrent thematic in this domain of research is the so-called \textit{universality}, which we formalize a bit below.

\subsection{Universality properties of random nodal sets}
Let us consider $\{a_k\}_{k\ge 1}$ a sequence of i.i.d. standard Gaussian random variables, a deterministic sequence of real numbers $\{c_k\}_{k\ge 1}$, a family of real functions $(f_k)_{k \geq 1}$ and define the random function 
\[
S_n(t):=\sum_{k=1}^n a_k c_k f_k(t).
\]
When $f_k(t)=t^k$ or $f_k(t)=\cos(kt)$, we recover the standard models of Gaussian random algebraic or trigonometric polynomials. The random variable of interest is here the number of zeros of $S_n$ in a given interval, i.e.
$$\mathcal{N}_n(a,b):=\text{Card}  \left\{t\in [a,b]\,\Big{|}\,S_n(t)=0 \right\}.$$

Now, given $\{\widetilde{a}_k\}_{k\ge 1}$ another sequence of i.i.d. standard random variables not necessarily Gaussian we set in the same way
\[
\widetilde{S}_n(t):=\sum_{k=1}^n  \widetilde{a}_k c_k f_k(t), \qquad  \widetilde{\mathcal{N}}_n(a,b):=\text{Card}  \left\{t\in [a,b]\,\Big{|}\,\widetilde{S}_n(t) \right\}.
\]

Of course, since $a_1$ is not equal in distribution to $\widetilde{a}_1$, the random variables $\mathcal{N}_n(a,b)$ and $\widetilde{\mathcal{N}}_n(a,b)$ have different distributions. Nevertheless, when the degree $n$ tends to infinity, that is to say when the amount of noise in the system increases, both random variables $\mathcal{N}_n(a,b)$ and $\widetilde{\mathcal{N}}_n(a,b)$, their expectations or variances, may display a similar asymptotic behavior. One then says that this behavior is universal since it does not depend on the distribution of the chosen random coefficients.
\par
\medskip
After a proper renormalization, this universal type of behavior can occur at a global scale, i.e. on a macroscopic interval $[a,b]$, but also at a microscopic scale, namely on shrinking intervals whose size goes to zero as $n$ goes to infinity. This type of microscopic behavior is then customarily called a local universality phenomenon. 
We refer to \cite{MR3439098} and the references therein for universality results in the case of real roots of various families of algebraic polynomials.
Similar questions for random trigonometric polynomials have been explored in \cite{flasche,iksanov2016,nousAMS,chang2018}.
We finally refer to \cite{nguyen2017roots, MR3846831,flasche2,kabzap2014} for more general conditions entailing analogue universality phenomena for roots of random analytic functions.
\par
\medskip
In all the aforementioned references, the analyticity of the random functions under consideration plays a central role, either via zeros counting formula such as Jensen formula, or via anti-concentration arguments that require an a priori unbounded number of derivatives.
\par
\medskip
In this article, we are precisely interested in establishing local universality results in a non-analytic context.  For this, we introduce and study a simple model of random periodic and possibly non-analytic functions which naturally generalizes random trigonometric polynomials. Namely, we consider the case where $f_k(t)=f(kt)$ and $f$ is a 
continuous $2\pi-$periodic function, which is piecewise $C^1$. 
In particular, this covers the case of random superpositions of triangular signals, which can be seen as a natural non-regular alternative to random trigonometric polynomials, see Figure \ref{fig.1} below. 

\begin{figure}[ht] 
\begin{center}\includegraphics[scale=0.5]{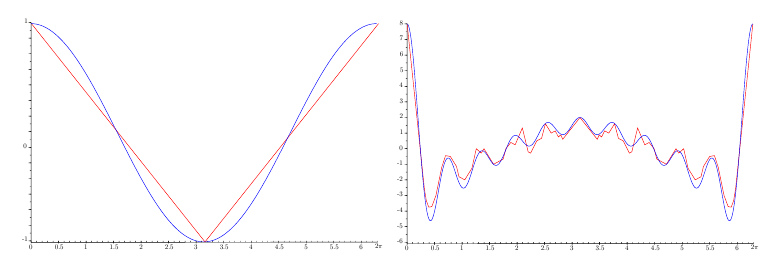}\end{center}
\caption{Cosine and triangular input functions $f$ (left) and a simulation of the corresponding random superpositions $S_n$ (right), for $n=10$ and Rademacher entries. On this particular example, the number of roots of the red triangular signal is twice the one of the trignometric blue one.} \label{fig.1}
\end{figure}

We would like to emphasize here that one can hardly go beyond the above regularity assumptions on $f$ since there are H\"older functions with unbounded number of roots on any interval. 
To the best of our knowledge, the question of universality for non-smooth models has not been investigated yet.

\subsection{Presentation of the model and main results} \label{sec.state}

We specify here our base model of random periodic signals and state the main results of the paper. Let $f$ be a continuous $2\pi-$periodic function that is almost everywhere differentiable with derivative $f'$. To avoid pathological behaviors, we require that  both
\[
\langle f, f \rangle >0 \quad \text{ and } \quad \langle f', f' \rangle >0, 
\]
where $\langle ., . \rangle$ denotes the usual $\mathbb L^2$ scalar product in $\mathbb L^2([0, 2\pi])$.
Now let $(a_k)_{k \geq 1}$ be a sequence of real random variables that are independent with common distribution such that $\mathbb E[a_1]=0$ and $\mathbb E[a_1^2]=0$. The main object of this article is then the study of the set of zeros of the random periodic function  $\sum_{k=1}^n a_k   f( k t)$ and its asymptotics as $n$ goes to infinity. 
Since we are primarily interested in the local universality properties of this random nodal set, i.e. the properties of the nodal set at microscopic scale, we normalize and localize the process at scale $1/n$, i.e. we rather consider the rescaled process 
\begin{equation}\label{eq.defX}
X_n(t)  :=\displaystyle{\frac{1}{\sqrt{n}} \sum_{k=1}^n a_k   f\left( \frac{k(p_n+t)}{n} \right)}, \quad t \in \mathbb{R},
\end{equation}
where $p_n$ be a sequence of integers such that $p_n/n \to \alpha \in (0,2\pi) \backslash \pi \mathbb Q$ as $n$ goes to infinity. In other words, we look at the zeros of the original random periodic signal in a window of size $2\pi/n$ around a point $\alpha\in  (0,2\pi) \backslash \pi \mathbb Q$, see Figure 1 below.

\begin{figure}[ht]
\begin{center}\includegraphics[scale=0.5]{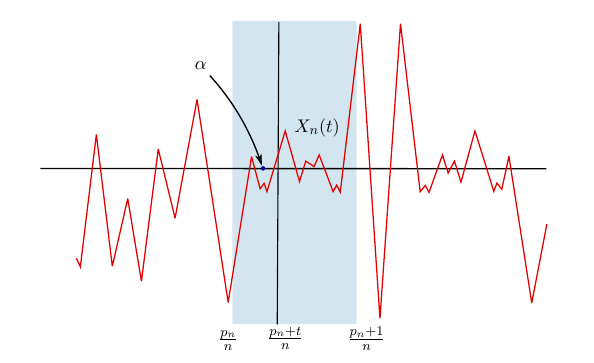}\end{center}
\caption{Localization of the signal at a microscopic scale.}
 \end{figure}
 The process $(X_n(t))_{t \in \mathbb{R}}$ is then naturally almost everywhere differentiable with
\begin{equation}\label{eq.defXprime}
X_n'(t)  =\displaystyle{\frac{1}{\sqrt{n}} \sum_{k=1}^n a_k   \times \frac{k}{n} \times  f'\left( \frac{k(p_n+t)}{n} \right)}.
\end{equation}

For some of the statements below, we will need to impose moreover that the limit point $\alpha$ is Diophantine and the sequence $p_n/n$ converges to $\alpha$ at polynomial speed. To be clear when this extra condition is needed or not, let us  introduce it formally as the following condition.

\begin{cond}We will say that $(\alpha,p_n)$ satisfies condition $(\star)$ if $\alpha/\pi$ is a Diophantine number, i.e. there exists $c_\alpha>0$ and $\nu_\alpha \in ]0,1[$ such that for every $(p,q)\in\mathbb{N} \times \mathbb N^*$ 
\begin{equation}\label{Def-Dioph}
\left|\frac{\alpha}{\pi}-\frac{p}{q}\right|\ge \frac{c_\alpha}{q^{2+\nu_\alpha}},
\end{equation}
and if $p_n/n$ converges to $\alpha$ at some polynomial speed, namely
\begin{equation}\label{conv-dioph}
\exists c_1,c_2>0,\,\,\text{s.t.}\,\,\left|\frac{p_n}{n}-\alpha\right|\le \frac{c_1 }{n^{c_2}}.
\end{equation}
\end{cond}

\begin{rmk}
Let us emphasize that almost all irrational or Diophantine, and that the condition on the polynomial speed of convergence is naturally satisfied in the simple case where $p_n := \lfloor n \alpha \rfloor$ for all $n \geq 1$.
\end{rmk}

Let us now present the main results of this paper. First, they consist in showing that, under various mild regularity assumptions on the input function $f$ and for every compact $[a,b]\subset \mathbb{R}$, the process $(X_n(t))_{t \in [a, b]}$ converges in distribution in the $C^1([a, b])$ topology to an explicit stationary Gaussian process $(X_{\infty}(t))_{t \in [a, b]}$. Setting $\check{f}(x):=f(-x)$ for all $x \in \mathbb R$, we indeed have the following convergence.
 
\begin{thm}\label{theo.C2}
Let $(a_k)_{k \geq 1}$ be a sequence of independent and identically distributed random variables, which are centered  with unit variance. Let $f$ be a continuous and $2\pi-$periodic function, which is piecewise $C^1$. Let $p_n$ be a sequence of real numbers such that $p_n/n$ converges to $\alpha \in (0,2\pi) \backslash \pi\mathbb Q$. Then the stochastic process $(X_n(t),X_n'(t))_{t \in \mathbb R}$ defined by Equation \eqref{eq.defX}
converges in the sense of finite marginals to a stationary and centered Gaussian process $(X_{\infty}(t), X_{\infty}'(t))_{t \in \mathbb R}$ such that, for all $s,t$  
\[
\mathbb E\left[ X_{\infty}(s) X_{\infty}(t) \right]   =: \rho(t-s), \quad \text{with} \quad \left \lbrace \begin{array}{l} \displaystyle{\rho(u):= \frac{1}{u} \int_{0}^u (f \ast \check{f})(x) dx}, \;\; u\neq 0,\\
\\
\rho(0)=\langle f, f\rangle.
\end{array}
\right.
\]
Moreover, under each of the following additional conditions
\begin{enumerate}[label=\roman*)] \setlength\itemsep{0.5em}
\item the function $f$ is $C^2$,
\item there exists $\alpha>0$ and $\beta>2$ with $\alpha \beta >1$  such that $f$ is $C^{1+\alpha}$ and $\mathbb E[|a_1|^{\beta}]<+\infty$,
\item there exists $\alpha>0$ such that $f$ is $C^{1+\alpha}$ and $\mathbb E[|a_1|^4]<+\infty$,
\item condition $(\star)$ holds, $f$ is piecewise linear and $\mathbb E[|a_1|^4]<+\infty$,
\end{enumerate}
\smallskip
the laws of $(X_n(t))_{t \in [a,b]}$ are tight with respect to the $C^1$ topology on $[a,b]$ for every compact interval $[a,b]$.

\end{thm}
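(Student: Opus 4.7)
The plan is to reduce the theorem to two ingredients: (a) finite-dimensional convergence of $(X_n(t), X_n'(t))_{t \in \mathbb R}$ to a centered Gaussian process with the announced covariance; (b) tightness of the laws of $X_n$ in $C^1([a,b])$. For (a), since both $X_n(t)$ and $X_n'(t)$ are linear combinations of the independent centered variables $a_k$ with uniformly bounded $O(1/\sqrt n)$ weights (using that $f$ and $f'$ are essentially bounded on their domains of continuity), any finite linear combination $\sum_j \lambda_j X_n(t_j) + \mu_j X_n'(t_j)$ satisfies the Lindeberg condition for triangular arrays, relying only on the assumption $\mathbb E[a_1^2]=1$. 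The limiting covariances reduce to showing that Riemann sums of the form
\[
\frac{1}{n}\sum_{k=1}^n g\!\left(\tfrac{k}{n}, \tfrac{k(p_n+s)}{n}\bmod 2\pi\right)
\]
converge to $\int_0^1 \tfrac{1}{2\pi}\int_0^{2\pi} g(u,x)\,dx\,du$, which I would establish by a Weyl-type equidistribution argument combined with continuity in the $u$-variable. Carrying this out on the product $f(\cdot)f(\cdot)$ (and its derivative analogues) and simplifying produces exactly the kernel $\rho(u) = u^{-1}\int_0^u f*\check f$.

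For tightness, it suffices to establish tightness of $X_n(a)$ (trivial from variance control) together with tightness of the derivative process $X_n'$ in $C([a,b])$. The workhorse in cases (i)--(iii) is Kolmogorov--Chentsov applied to $X_n'$, after a Rosenthal-type moment bound on
\[
X_n'(t)-X_n'(s) = \frac{1}{\sqrt n}\sum_{k=1}^n a_k\cdot\frac{k}{n}\bigl[f'(k(p_n+t)/n) - f'(k(p_n+s)/n)\bigr].
\]
In case (i), a second moment estimate using $\|f''\|_\infty$ directly gives a bound of order $|t-s|^2$. In case (ii), applying Rosenthal's inequality at exponent $\beta>2$ to the weighted sum, and using the Hölder bound on $f'$, produces $\mathbb E|X_n'(t)-X_n'(s)|^\beta \le C|t-s|^{\alpha\beta}$; the assumption $\alpha\beta>1$ is then exactly what Kolmogorov requires. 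Case (iii) is the analogous computation with $p=4$ moments and a more careful estimate, perhaps by dyadic chaining or by decomposing according to the Fourier-type scale, to replace the crude Rosenthal bound by something that uses the full decay of the Hölder norm of $f'$ at small scales.

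The main obstacle is case (iv): when $f$ is piecewise linear, $f'$ is a step function, and no moment bound on $X_n'(t)-X_n'(s)$ of the form $C|t-s|^{1+\varepsilon}$ can hold in general, so classical Kolmogorov--Chentsov is useless. Here I would invoke the high-dimensional Berry--Esseen bounds of Chernozhukov--Chetverikov--Kato to transfer the problem to its Gaussian counterpart: discretize a compact interval at a fine grid $t_1<\dots<t_N$ with $N=N(n)$ chosen to grow polynomially, and show that the law of the vector $(X_n'(t_i))_{i\le N}$ is close in Kolmogorov distance on rectangles to that of the Gaussian limit $(X_\infty'(t_i))_{i\le N}$. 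The Diophantine condition $(\star)$ enters precisely here: it guarantees quantitative equidistribution of $k(p_n+t)/n \bmod 2\pi$ uniformly in $t$ and in the grid, controlling the covariance error at polynomial speed, so that $N$ can be taken large enough to recover an arbitrarily fine modulus of continuity while still closing the CCK estimate. Tightness for $X_\infty'$ is then classical (Dudley's entropy bound applied to its explicit smooth covariance, since $\rho$ inherits analyticity-like decay from the convolution structure), and transfers to $X_n'$ via the comparison.
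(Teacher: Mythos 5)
Your overall architecture (finite-dimensional marginals plus tightness of $X_n'$ in $C^0$) matches the paper, and your treatment of the marginals (CLT/Lindeberg plus equidistribution of $k(p_n+t)/n$ mod $2\pi$) is in the same spirit as the paper's Fourier-series kernel estimates, just expressed via a Weyl-type argument instead of an explicit kernel $K_n$. Cases (i) and (ii) are also essentially the paper's proof (the paper uses Burkholder--Davis--Gundy where you propose Rosenthal; both give $\mathbb E|X_n'(t)-X_n'(s)|^\beta\lesssim|t-s|^{\alpha\beta}$ with the same conclusion).

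The genuine gap is case (iii). You propose to close it by a ``more careful estimate, perhaps by dyadic chaining or by decomposing according to the Fourier-type scale,'' still within the Kolmogorov--Chentsov framework with fourth moments. That cannot work when $\alpha<1/4$: under $\mathbb E[a_1^4]<\infty$ the best moment bound you can extract is $\mathbb E|X_n'(t)-X_n'(s)|^4\lesssim|t-s|^{4\alpha}$, and chaining or a Littlewood--Paley-style splitting does not improve the \emph{exponent} --- it only helps once the Dudley/Kolmogorov integral already converges, which requires $4\alpha>1$. This is exactly the obstruction the paper's Proposition \ref{Thm/C1/moment4} is designed to overcome: one must first discretize the supremum on a polynomial-size grid, control the between-grid oscillation crudely via the H\"older modulus and $\frac{1}{n}\sum|a_k|$, and then invoke the Chernozhukov--Chetverikov--Kato high-dimensional Berry--Esseen bound to replace the coefficients by Gaussians; only then does hypercontractivity manufacture moments of arbitrary order $\beta$ with $\alpha\beta>1$, and Kolmogorov--Lamperti closes the argument. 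In other words, the CCK reduction you reserve for case (iv) is already essential for case (iii).

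A second, smaller issue is in case (iv). You correctly identify CCK as the tool, but the transfer to the limit requires two distinct comparisons: the CCK Berry--Esseen bound takes $X_n'$ (non-Gaussian coefficients) to a Gaussian surrogate $\widetilde X_n'$ with the \emph{same finite-$n$ covariance}; a separate Gaussian-vector maxima comparison theorem (Theorem 2 of \cite{chernouille2}, requiring the covariance discrepancy $\Delta$ to vanish polynomially in $n$, which is where condition $(\star)$ enters via the quantitative kernel estimate \eqref{eq.conv4}) is then needed to pass from $\widetilde X_n'$ to $X_\infty'$. You also gloss over the fact that for piecewise linear $f$, $X_n'$ is piecewise constant rather than continuous, so the discretized supremum must be taken over the (order-$n$ many) jump locations rather than over an arbitrary fine grid, and the tightness is most naturally formulated in the Skorokhod topology on $D([a,b])$ before being upgraded to uniform tightness using continuity of the limit. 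Finally, $\rho$ does not inherit ``analyticity-like decay'' in this case: for step-function $f'$, $\rho''$ is only Lipschitz near the origin, giving $X_\infty'$ a roughly $1/2$-H\"older modification --- this is enough for continuity, but Dudley's entropy bound is applied to that modest regularity, not to anything analytic.
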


Now the main byproduct of establishing the above convergence in the $C^1$ topology of every compact $[a,b]$ is that one deduces that the nodal set associated to $(X_n(t))_{t \in [a,b]}$ converges to the one of $(X_{\infty}(t))_{t \in [a,b]}$. For example, one then immediately gets the convergence in distribution of the number of zeros in a given compact set. We refer the reader to section \ref{Cvzeros} for more details.

\begin{thm}\label{theo.ConvZero}
Under the hypotheses of Theorems \ref{theo.C2} and if one of the additional conditions $i), ii), iii)$ or $iv)$ holds, for any interval $[a,b] \subset \mathbb{R}$, the number of zeros $\mathcal N(X_n,[a,b])$ of $(X_n(t))_{t \in \mathbb{R}}$ in $[a,b]$ converges in distribution to its analogue $\mathcal N(X_{\infty},[a,b])$ as $n$ goes to infinity.
\end{thm}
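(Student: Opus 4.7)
The plan is to deduce the distributional convergence of the zero count from the $C^1$ functional convergence of $X_n$ to $X_\infty$ supplied by Theorem \ref{theo.C2}, via the Skorokhod representation theorem and a continuous mapping argument. Since the functional $\mathcal{N}(\cdot, [a,b])$ is not continuous on all of $C^1([a,b])$, I would first realize the convergence almost surely on some auxiliary probability space, and then show that $\mathcal{N}(\cdot, [a,b])$ is continuous at $X_\infty$ almost surely. The set of $g \in C^1([a,b])$ at which this functional is continuous contains all $g$ such that $g(a) g(b) \neq 0$ and such that $g$ has no tangential zero, meaning no $t \in [a,b]$ with $g(t) = g'(t) = 0$: for such a $g$, the zeros in $[a,b]$ are finitely many, all simple, all in $(a,b)$, and an elementary perturbation argument shows that any function close enough in $C^1$ norm has exactly the same number.

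The heart of the proof is therefore to verify that $X_\infty$ enjoys these two properties almost surely. That $X_\infty(a) \neq 0$ and $X_\infty(b) \neq 0$ almost surely is immediate, since each is a centered Gaussian variable with positive variance $\rho(0) = \langle f, f \rangle$. The absence of tangential zeros is then established via Bulinskaya's lemma applied to the $C^1$ Gaussian process $(X_\infty(t))$ on $[a,b]$: for every $t$, the pair $(X_\infty(t), X_\infty'(t))$ is centered Gaussian with covariance matrix equal to $\mathrm{diag}(\langle f, f \rangle, \langle f', f' \rangle)$, the off-diagonal entry vanishing by stationarity of $X_\infty$ and both diagonal entries being strictly positive by the standing assumption on $f$. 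Hence $(X_\infty(t), X_\infty'(t))$ admits a density at the origin that is bounded uniformly in $t \in [a,b]$, and Bulinskaya's lemma yields that almost surely no $t \in [a,b]$ is simultaneously a zero of $X_\infty$ and $X_\infty'$.

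Combining these two facts, almost surely $X_\infty$ has finitely many zeros in $[a,b]$, all lying in the open interval $(a,b)$ and all simple. Around each such zero one finds a small closed interval on which $X_\infty$ is strictly monotone and changes sign, while $|X_\infty|$ is bounded below by some $\delta > 0$ on the complement of their union; the $C^1$ convergence $X_n \to X_\infty$ transfers both properties to $X_n$ for $n$ large enough, yielding $\mathcal{N}(X_n, [a,b]) = \mathcal{N}(X_\infty, [a,b])$ eventually and proving the announced convergence in law. The essential and, in my view, only substantive obstacle in this scheme is the exclusion of tangential zeros for the limit process: it is precisely this step that makes the joint $C^1$ convergence of $(X_n, X_n')$ obtained in Theorem \ref{theo.C2}, rather than merely the uniform convergence of $X_n$, indispensable here.
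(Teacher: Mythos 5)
Your proposal is correct and follows essentially the same route as the paper: a deterministic $C^1$-stability lemma for the zero count at non-degenerate functions, combined with Bulinskaya's lemma to rule out tangential zeros of $X_\infty$ (the paper packages the transfer via the continuous mapping theorem rather than Skorokhod representation, which is an immaterial difference). One minor slip: the variance of $X_\infty'(t)$ is $-\rho''(0)=\tfrac{1}{3}\langle f',f'\rangle$, not $\langle f',f'\rangle$, though only its strict positivity matters for the argument.
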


As in \cite{iksanov2016}, one then also gets the convergence in law of the point measure associated to the zeros of $(X_n(t))_{t \in \mathbb{R}}$, the convergence taking place in the space of locally finite point measures on $\mathbb{R}$ endowed with the vague topology. 

\begin{thm}\label{theo.ConvPointMeas}
Under the hypotheses of Theorems \ref{theo.C2} and if one of the additional conditions $i), ii), iii)$ or $iv)$ holds,, the point measure associated to the zeros of $(X_n(t))_{t \in \mathbb{R}}$ converges to the point measure associated to zeros of $(X_{\infty}(t))_{t \in \mathbb{R}}$.
\end{thm}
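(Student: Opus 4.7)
The plan is to upgrade Theorem \ref{theo.ConvZero} from convergence for a single interval to joint convergence for any finite collection of disjoint bounded intervals, and then to invoke a standard criterion for vague convergence of locally finite point measures. More precisely, by a classical result of Kallenberg, if the limiting point process $\mathcal{Z}_\infty$ associated to the zeros of $X_\infty$ is almost surely simple and if for every finite family of disjoint bounded intervals $[a_1,b_1],\ldots,[a_k,b_k]$ the joint law of the counts $(\mathcal{N}(X_n,[a_j,b_j]))_{1\le j \le k}$ converges to that of $(\mathcal{N}(X_\infty,[a_j,b_j]))_{1\le j \le k}$ (with the further requirement that $\mathbb{P}(X_\infty(a_j)=0)=\mathbb{P}(X_\infty(b_j)=0)=0$), then the associated random point measures converge in distribution for the vague topology.

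The joint convergence of the finite-dimensional counts then follows from exactly the same mechanism as in the proof of Theorem \ref{theo.ConvZero}. Setting $[a,b]:=[\min_j a_j, \max_j b_j]$, Theorem \ref{theo.C2} yields the convergence in distribution of $(X_n,X_n')$ to $(X_\infty,X_\infty')$ in $C^1([a,b])\times C^0([a,b])$. The vector-valued functional
\[
\Phi : (g,g') \longmapsto \bigl(\text{Card}\{t\in[a_j,b_j]: g(t)=0\}\bigr)_{1\le j\le k}
\]
is continuous at every path $(g,g')$ such that $g(a_j)\ne 0$, $g(b_j)\ne 0$ for each $j$ and $\{g=g'=0\}\cap[a,b]=\emptyset$. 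Applying the continuous mapping theorem to $\Phi$ then delivers the desired joint convergence.

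It remains to check that $X_\infty$ almost surely satisfies these conditions. First, since $X_\infty(t)$ is a centered Gaussian variable of variance $\rho(0)=\langle f,f\rangle>0$, one has $\mathbb{P}(X_\infty(t)=0)=0$ for every fixed $t$, which handles the endpoints and also implies that $\mathcal{Z}_\infty$ is almost surely simple in the sense that it has no atoms of multiplicity larger than one (combined with the next point). Second, the explicit covariance $\rho$ of $X_\infty$ allows one to compute the covariance matrix of the pair $(X_\infty(t),X_\infty'(t))$ and to check that it is positive definite for every $t$, using the assumption $\langle f',f'\rangle>0$. A Bulinskaya-type argument, based on the boundedness of the joint density of $(X_\infty(t),X_\infty'(t))$ on any compact interval, then ensures that almost surely the process has no tangential zero on $[a,b]$, namely $\{X_\infty=X_\infty'=0\}\cap[a,b]=\emptyset$.

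The main obstacle, as already in the proof of Theorem \ref{theo.ConvZero}, is to justify the almost sure continuity of the counting functional $\Phi$ at the limiting realization, which reduces to controlling the joint non-degeneracy of $(X_\infty(t),X_\infty'(t))$ uniformly in $t$. Once this is done, all the remaining steps are routine: application of the continuous mapping theorem and invocation of Kallenberg's criterion to pass from joint convergence of counts to vague convergence of the associated random point measures on $\mathbb{R}$.
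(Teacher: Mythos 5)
Your proposal is correct and follows essentially the same route as the paper, which for this statement simply defers to the argument of Iksanov--Kabluchko--Marynych \cite{iksanov2016}: Kallenberg's criterion for vague convergence of simple point processes, combined with joint convergence of the zero counts over finitely many disjoint intervals obtained from the $C^1$ functional convergence, the continuous mapping theorem applied to the vector-valued counting functional, and the almost sure non-degeneracy of $X_\infty$ (Lemma \ref{lem.nondeg}) together with the non-vanishing at deterministic endpoints. The only difference is that you spell out the details that the paper leaves implicit by citation.
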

As it is clear on the covariance function of the limit process $(X_{\infty}(t))_{t \in \mathbb{R}}$, its distribution naturally do depend on the input function $f$. This dependence is rather explicit and allows computations for the observables associated to the limit nodal set. For example, using the celebrated Kac--Rice formula , one easily computes the expected number of zeros in any interval.

\begin{prop}\label{moy.limit}
For any $[a,b] \subset \mathbb R$, we have
\[
\mathbb E[\mathcal N(X_{\infty},[a,b])] = \frac{b-a}{\pi}  \sqrt{ \frac{1}{3} \frac{\langle f',f' \rangle}{\langle f,f \rangle}}.
\]
\end{prop}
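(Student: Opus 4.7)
The plan is to apply the Kac--Rice formula to the stationary centered Gaussian process $(X_\infty(t))_{t\in\mathbb R}$. Since $X_\infty$ is centered, stationary, Gaussian, and its covariance function $\rho$ will turn out to be $C^2$ at the origin, $X_\infty$ admits a mean-square (and almost surely continuous) derivative $X_\infty'$, and $X_\infty(t)$ and $X_\infty'(t)$ are independent at a given point $t$. The classical Kac--Rice formula then yields
\[
\mathbb E[\mathcal N(X_\infty,[a,b])] \;=\; \frac{b-a}{\pi}\sqrt{\frac{-\rho''(0)}{\rho(0)}},
\]
so the whole proof reduces to identifying $\rho(0)$ and $\rho''(0)$ in terms of $f$.

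By definition $\rho(0)=\langle f,f\rangle$, so the only real task is to compute $\rho''(0)$. The approach is to introduce the $2\pi$-periodic autocorrelation $g(x):=(f\ast\check f)(x)=\int_0^{2\pi}f(y)\,f(y-x)\,dy$ and observe that, for $u\neq 0$, $\rho(u)=\frac{1}{u}\int_0^u g(x)\,dx$. Setting $G(u):=\int_0^u g(x)\,dx$, a third-order Taylor expansion of $G$ at the origin gives
\[
\rho(u)\;=\;\frac{G(u)}{u}\;=\;g(0)+\frac{g'(0)}{2}u+\frac{g''(0)}{6}u^2+o(u^2),
\]
so that $\rho(0)=g(0)=\langle f,f\rangle$ and, most importantly, $\rho''(0)=\tfrac{1}{3}g''(0)$.

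It then remains to check that $g$ is $C^2$ at $0$ with $g''(0)=-\langle f',f'\rangle$. Here the key step is an integration by parts under the integral sign: since $f$ is $2\pi$-periodic, continuous, and piecewise $C^1$ with $f'\in L^2$, the boundary terms vanish and one obtains
\[
g'(x)=-\int_0^{2\pi}f(y)\,f'(y-x)\,dy=\int_0^{2\pi}f'(y)\,f(y-x)\,dy,
\qquad
g''(x)=-\int_0^{2\pi}f'(y)\,f'(y-x)\,dy.
\]
The second expression is itself the convolution of two $L^2$ functions, hence continuous, so $g''(0)=-\int_0^{2\pi}f'(y)^2\,dy=-\langle f',f'\rangle$. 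Substituting back, $\rho''(0)=-\langle f',f'\rangle/3$, and plugging into the Kac--Rice formula produces the announced expression.

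The only delicate point, and the main potential obstacle, is justifying the integration by parts (and hence the $C^2$ regularity of $g$ at $0$) under the weak regularity assumption that $f$ is merely continuous and piecewise $C^1$. This can be handled either by splitting $[0,2\pi]$ at the finitely many break points of $f'$ and using continuity of $f$ to cancel the boundary contributions, or, equivalently, via Fourier series: writing $f(x)=\sum_k c_k e^{ikx}$ one has $g(x)=2\pi\sum_k|c_k|^2 e^{-ikx}$ and $\sum_k k^2|c_k|^2=\tfrac{1}{2\pi}\langle f',f'\rangle<\infty$ thanks to $f'\in L^2$, which gives absolute convergence of the series defining $g''$ and thus directly $g''(0)=-\langle f',f'\rangle$. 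Once this regularity issue is settled, the rest of the argument is a direct computation.
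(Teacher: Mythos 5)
Your proposal is correct and follows essentially the same route as the paper: invoke the Kac--Rice formula for the stationary Gaussian limit, reducing everything to the identity $\rho''(0)=-\tfrac{1}{3}\langle f',f'\rangle$, which the paper obtains (in Lemmas \ref{lem.minorcovar} and \ref{lem.nondeg}) by differentiating $t\,\rho(t)=\int_0^t (f\ast\check f)(x)\,dx$ — equivalent to your Taylor expansion of $G(u)/u$. Your justification of the $C^2$ regularity of $f\ast\check f$ via Fourier series (using $\sum_p p^2|\hat f(p)|^2<\infty$) is sound; just note the paper's convolution and scalar product carry a $\tfrac{1}{2\pi}$ normalization, which cancels in the final ratio.
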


\begin{rmk}
Going back to trigonometric polynomials, e.g. if the input function is cosine $f = \cos$, we have $\langle f,f \rangle=\langle f',f' \rangle$ and we thus recover the fact that the expected number of zeros of the  universal limit $\sin_c$ process in $[a,b] \subset \mathbb R$ is $(b-a)/\pi \sqrt{3}$.
\end{rmk}

The plan of the article in the following. The proofs of Theorem\ref{theo.C2}, i.e. the convergences in the $C^1([a, b])$ topology can be split into two parts: the convergence of finite marginals and some tightness criteria. The convergence of finite marginals is detailed in the next Section \ref{sec.marginals}, whereas the different tightness criteria depending on the regularity of $f$ are established in Section \ref{sec.tightness}. The convergence of the number of zeros is then treated in Section \ref{Cvzeros}.
In order to facilitate the reading of the paper, the proof of some technical lemmas used in Section \ref{sec.marginals} are postponed in Appendix A.

\section{Finite dimensional convergence to a universal limit process} \label{sec.marginals}

As described above, the proofs of convergence in distribution for stochastic processes classically split into establishing the convergence of finite dimensional marginals and then establishing some tightness criteria. This section is devoted to the first task. 
More precisely, the next Section \ref{sec.kernel} establishes some technical results concerning the uniform convergence of covariance kernels, that will allow us to establish the convergence of the finite dimensional marginals of both processes $(X_n(t))$ and $(X_n'(t))$ and make explicit the covariance functions of their limits in Section \ref{sec.marginal}. The non-degeneracy and regularity of the limit process are then studied in Sections \ref{sec.nondeg} and \ref{sec.regu}.

\subsection{Qualitative and quantitatives kernel estimates} \label{sec.kernel} 
This Section is devoted to both qualitative and quantitative estimates for Riemann/ergodic sums involving periodic functions. Let us first introduce the function $K : \mathbb R \to \mathbb C$ defined by
\[
K(x):=e^{i x/2} \sin_c(x/2) =  \int_0^1 e^{i u x}du.
\]
The function $K$ is twice differentiable and one can easily check that the function and its derivatives are uniformly bounded. The next technical proposition will allow us to establish the convergence of the covariance function of the process $X_n(t)$ and to explicit the covariance of the limit. 
To facilitate the reading of the paper, its proof is postponed to Section \ref{proof.prop.conv} of the Appendix.

\begin{prop}\label{pro.conv.kernel}
Let $p_n$ be a sequence of real numbers such that the ratio $p_n/n$ converges to $\alpha \in (0,2\pi) \backslash \pi \mathbb Q$. 
Let $g$ and $h$ be two continuous $2\pi-$periodic functions which are piecewise $C^1$. Then, the sequences
\[
\begin{array}{ll}
C_n(s,t):= & \displaystyle{\frac{1}{n} \sum_{k=1}^n  g\left( \frac{k(p_n+t)}{n} \right)h\left( \frac{k(p_n+s)}{n} \right),}\\
\\
D_n(s,t):= & \displaystyle{\frac{1}{n} \sum_{k=1}^n \frac{k^2}{n^2}  g'\left( \frac{k(p_n+t)}{n} \right)h'\left( \frac{k(p_n+s)}{n} \right),}\\
\\
E_n(s,t):= & \displaystyle{\frac{1}{n} \sum_{k=1}^n \frac{k}{n}  g\left( \frac{k(p_n+t)}{n} \right)h'\left( \frac{k(p_n+s)}{n} \right),}
\end{array}
\]
converge uniformly as $n$ goes to infinity, namely we have
\begin{eqnarray}
\limsup_{n \to +\infty} \sup_{s,t \in [a,b]} \left| C_n(s,t) -\sum_{p\in \mathbb Z}\hat{g}(p) \hat{h}(-p) K(p(t-s)) \right|=0, \label{eq.conv1}\\ \nonumber
\\ 
\limsup_{n \to +\infty} \sup_{s,t \in [a,b]} \left| D_n(s,t) +\sum_{p\in \mathbb Z}\hat{g}(p) \hat{h}(-p) p^2 K''(p(t-s)) \right|=0, \label{eq.conv2}\\ \nonumber
\\ 
\limsup_{n \to +\infty} \sup_{s,t \in [a,b]} \left| E_n(s,t) +\sum_{p\in \mathbb Z}\hat{g}(p) \hat{h}(-p) p K'(p(t-s)) \right|=0. \label{eq.conv3}
\end{eqnarray}
Moreover, if $(\alpha, p_n)$ satisfy condition $(\star)$, there exists constants $D,\delta>0$ such that 
\begin{eqnarray}
  \sup_{s,t \in [a,b]} \left| D_n(s,t) +\sum_{p\in \mathbb Z}\hat{g}(p) \hat{h}(-p) p^2 K''(p(t-s)) \right| \leq \frac{D}{n^{\delta}} \label{eq.conv4}.
\end{eqnarray}
\end{prop}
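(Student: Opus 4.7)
The natural approach I would take is Fourier decomposition of the periodic functions $g$ and $h$ and identification of the surviving diagonal modes. Setting $x_n := p_n/n$ and writing $g(x) = \sum_{p\in\mathbb Z}\hat g(p)\,e^{ipx}$ (similarly for $h$), substitution into $C_n(s,t)$ and an interchange of summations yield
\[
C_n(s,t) = \sum_{p,q\in\mathbb Z}\hat g(p)\,\hat h(q)\,\frac{1}{n}\sum_{k=1}^n e^{i\theta_{p,q,n}\,k},\qquad \theta_{p,q,n} := (p+q)x_n + \tfrac{pt+qs}{n}.
\]
The inner geometric sum is bounded by $O(1/(n|\sin(\theta_{p,q,n}/2)|))$. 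For the diagonal $q=-p$, one has $\theta_{p,-p,n}=p(t-s)/n$ and the sum is a Riemann approximation of $\int_0^1 e^{ip u(t-s)}du = K(p(t-s))$. For $q\neq -p$, the limit $(p+q)\alpha$ of $\theta_{p,q,n}$ is not in $2\pi\mathbb Z$ since $\alpha/\pi$ is irrational, so the contribution is $O(1/n)$ for each fixed $(p,q)$.

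To justify the manipulation and obtain uniform convergence on $[a,b]^2$, I would truncate at Fourier level $N$: set $g_N := \sum_{|p|\leq N}\hat g(p)\,e^{ipx}$ and similarly $h_N$. Since $g$ and $h$ are continuous and of bounded variation (being piecewise $C^1$), the Dirichlet--Jordan theorem ensures $\|g-g_N\|_\infty,\|h-h_N\|_\infty\to 0$ as $N\to\infty$. The decomposition
\[
C_n^{g,h}-C_n^{g_N,h_N} = C_n^{g-g_N,h} + C_n^{g_N,h-h_N}
\]
bounds both errors uniformly in $n,s,t$ by $\|g-g_N\|_\infty\|h\|_\infty+\|g\|_\infty\|h-h_N\|_\infty$, while $C_n^{g_N,h_N}$ involves only $O(N^2)$ modes whose limits are uniform in $(s,t)$ by the explicit geometric-sum formula. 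Sending $n\to\infty$ first and then $N\to\infty$ yields \eqref{eq.conv1}.

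For \eqref{eq.conv2} and \eqref{eq.conv3} the same strategy applies, using $\widehat{g'}(p)=ip\hat g(p)$. The complication is that $g',h'$ are only piecewise continuous, so their Fourier series converge in $L^2$ but not in $L^\infty$. I would therefore replace sup-norm control by Cauchy--Schwarz combined with a Weyl-equidistribution estimate of the form
\[
\frac{1}{n}\sum_{k=1}^n \frac{k^2}{n^2}\,|\phi(kx_n+kt/n)|^2 \;\longrightarrow\; \frac{1}{3}\cdot\frac{1}{2\pi}\int_0^{2\pi}|\phi|^2\,dx,
\]
which suffices to bound the tail $D_n^{g-g_N,h}$ by a multiple of $\|g'-g_N'\|_{L^2}\|h'\|_{L^2}$, vanishing as $N\to\infty$ uniformly in $n$. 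Absolute convergence of the limit $\sum_p p^2\hat g(p)\hat h(-p)K''(p(t-s))$ follows from $\sum_p p^2|\hat g(p)\hat h(-p)|\leq \|g'\|_{L^2}\|h'\|_{L^2}$ by Cauchy--Schwarz.

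For the quantitative estimate \eqref{eq.conv4} under condition $(\star)$, I would combine the Diophantine lower bound $|(p+q)\alpha/\pi-m|\geq c_\alpha/|p+q|^{1+\nu_\alpha}$ with the polynomial convergence $|x_n-\alpha|\leq c_1 n^{-c_2}$ to deduce $|1-e^{i\theta_{p,q,n}}|^{-1}\leq C|p+q|^{1+\nu_\alpha}$ for off-diagonal modes with $|p+q|$ not too large relative to $n$. In the piecewise linear case $\hat f(p)=O(1/p^2)$, so splitting the Fourier sum at a threshold $N=n^\eta$ and optimizing $\eta\in(0,1)$ balances the geometric-sum error on $|p+q|\leq N$ against the Fourier-tail error on $|p+q|>N$, yielding the required rate $O(n^{-\delta})$. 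The main obstacle throughout is precisely this balance between the two competing errors — made particularly delicate for $D_n$ by the $p^2$ weight, which demands stronger Fourier decay than mere continuity provides.
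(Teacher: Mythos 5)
Your overall strategy --- Fourier decomposition of $g,h$, truncation, isolating the diagonal $q=-p$ modes, and using the irrationality/Diophantine property of $\alpha$ to kill the off-diagonal geometric sums via $K_n$ and its derivatives --- is the same as the paper's. The main divergence is how the two handle $D_n$, $E_n$, where $g',h'$ are only piecewise continuous and the Fourier series no longer converge in $L^\infty$. You propose Cauchy--Schwarz plus a uniform Weyl-equidistribution estimate for $|g'-g_N'|^2$; the paper instead splits $g',h'$ into positive and negative parts, sandwiches each by $C^\infty$ mollifications $g'_{\varepsilon^-}\le g' \le g'_{\varepsilon^+}$ with $\|g'-g'_{\varepsilon^\pm}\|_2\le\varepsilon$, and exploits the resulting monotonicity $D_n^{\varepsilon^-}\le D_n\le D_n^{\varepsilon^+}$, so the whole argument stays in the normally-convergent Fourier setting. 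Both routes reach the goal, but your Weyl lemma, to hold uniformly in $t\in[a,b]$ as required, has to be proved by exactly this kind of two-sided sandwiching by continuous functions, so the cost is merely relocated. Also, your phrase that the Cauchy--Schwarz tail vanishes ``uniformly in $n$'' is too strong --- the Weyl error depends on $g'-g_N'$ and degrades as $N$ grows --- though the iterated limit ($n\to\infty$ first, then $N\to\infty$) that the argument actually needs is fine.

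There is, however, a genuine gap in your quantitative part. Your plan for \eqref{eq.conv4} relies on $\hat f(p)=O(1/p^2)$, which holds only when $f$ is piecewise linear. Proposition~\ref{pro.conv.kernel} asserts the polynomial rate for all continuous piecewise $C^1$ inputs $g,h$; in that generality $\widehat{g'}(p)$ is merely square-summable, with no polynomial decay rate, so the Fourier-tail error $\sum_{|p|>N}|\widehat{g'}(p)\widehat{h'}(-p)|$ past the cutoff $N=n^\eta$ cannot be bounded by any power of $1/N$ and your balancing step breaks down. The paper resolves this precisely with the smooth approximants: it chooses $C^\infty$ functions $g'_{n^\pm}$ with $\|g'-g'_{n^\pm}\|_2\le n^{-\beta}$ and $\|g_{n^\pm}^{(j)}\|_\infty=O(n^{\beta j})$, whose Fourier coefficients then decay faster than any fixed power, and optimizes the two exponents $0<\beta<\gamma$. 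Without this extra mollification step your argument proves the estimate only for piecewise linear $f$, not for the class of $g,h$ the Proposition actually claims.
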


\begin{rmk}The three series appearing in the last proposition are normally convergent. Indeed, if $g$ and $h$ are continuous and piecewise $C^1$, we have $||g||_2 <+\infty$, $||h||_2 <+\infty$ and $||g'||_2 <+\infty$, $||h'||_2 <+\infty$. Now, since $K$ is uniformly bounded, there exists a positive constant $c$ such that 
\[
\left|  \sum_{p\in \mathbb Z}\hat{g}(p) \hat{h}(-p) K(p(t-s))\right|^2 \leq c ||g||_2^2 ||h||_2^2,
\]
and we have similar upper bounds for the two other series since $K'$ and $K''$ are also uniformly bounded. 
\if{
\[
\left|  \sum_{p\in \mathbb Z}\hat{g}(p) \hat{h}(-p) p^2 K''(p(t-s))\right|^2 \leq  c' ||g'||_2^2 ||h'||_2^2,
\]
\[
\left|  \sum_{p\in \mathbb Z}\hat{g}(p) \hat{h}(-p) p K'(p(t-s))\right|^2 \leq  c'' ||g||_2^2 ||h'||_2^2.
\]
}\fi
\end{rmk}

\subsection{Convergence of the marginals} \label{sec.marginal} 
With the help of Proposition \ref{pro.conv.kernel}, we are now in position to establish the convergence of the process $(X_n(t),X_n'(t))_{t \in [a,b]}$ in the sense of finite marginals. 
 
\begin{prop}\label{pro.conv.marginal}
Let $(a_k)_{k \geq 1}$ be a sequence of independent and identically distributed random variables, which are centered  with unit variance. Let $f$ be a continuous and $2\pi-$periodic function, which is piecewise $C^1$. Let $p_n$ be a sequence of real numbers such that $p_n/n$ converges to $\alpha \in (0,2\pi) \backslash \pi\mathbb Q$. Then the stochastic process $(X_n(t),X_n'(t))_{t \in [a,b]}$ defined by Equation \eqref{eq.defX}
converges in the sense of finite marginals to a stationary and centered Gaussian process $(X_{\infty}(t), X_{\infty}'(t))_{t \in [a,b]}$ such that, for all $s,t$ 
\[
\mathbb E\left[ X_{\infty}(s) X_{\infty}(t) \right]   =: \rho(t-s), \quad \text{with} \quad \left \lbrace \begin{array}{l} \displaystyle{\rho(u):= \frac{1}{u} \int_{0}^u (f \ast \check{f})(x) dx}, \;\; u\neq 0,\\
\\
\rho(0)=\langle f, f\rangle.
\end{array}
\right.
\]
Moreover, for $0\leq i,j\leq 1$, the convergence of the covariance functions is uniform
\begin{equation}\label{eq.con.uni}
\lim_{n \to +\infty} \sup_{(s,t)\in [a,b]^2} \left| \mathbb E\left[ X_{n}^{(i)}(s) X_{n}^{(j)}(t) \right]- \mathbb E\left[ X_{\infty}^{(i)}(s) X_{\infty}^{(j)}(t) \right] \right|  =0,
\end{equation}
and, if $(\alpha, p_n)$ satisfy condition $(\star)$, then the convergence rate is polynomial, namely there exists $\kappa>0$ such that 
\begin{equation}\label{eq.con.poly}
\sup_{(s,t)\in [a,b]^2} \left| \mathbb E\left[ X_{n}^{(i)}(s) X_{n}^{(j)}(t) \right]- \mathbb E\left[ X_{\infty}^{(i)}(s) X_{\infty}^{(j)}(t) \right] \right| = O \left(\frac{1}{n^{\kappa}}\right).
\end{equation}
\end{prop}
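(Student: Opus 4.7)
The proof proposal is to split the statement into two independent tasks: (a) convergence of covariances, via Proposition~\ref{pro.conv.kernel}, and (b) asymptotic Gaussianity of the finite-dimensional marginals, via a standard Lindeberg CLT.

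\textbf{Step 1: Covariances as Riemann sums.} By independence, centering, and unit variance of the $(a_k)$, direct computation gives
\[
\mathbb{E}[X_n(s)X_n(t)] = C_n(s,t)\big|_{g=h=f}, \quad \mathbb{E}[X_n'(s)X_n'(t)] = D_n(s,t)\big|_{g=h=f},
\]
and analogously $\mathbb{E}[X_n(s)X_n'(t)] = E_n(t,s)|_{g=h=f}$. Thus Proposition~\ref{pro.conv.kernel} directly yields the uniform convergence of every entry of the covariance to an explicit limit expressed as a Fourier-type series involving $\hat f(p)\hat f(-p)$ and $K,K',K''$, and under condition~$(\star)$ the polynomial rate \eqref{eq.con.poly}. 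I would need to comment here on whether \eqref{eq.conv4} extends to $C_n$ and $E_n$: since $C_n$ and $E_n$ carry weights $1$ and $k/n$ (milder than $k^2/n^2$), the same proof strategy used for $D_n$ under $(\star)$ goes through, usually with at least as good an exponent, so I can invoke that variant of Proposition~\ref{pro.conv.kernel} for all three.

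\textbf{Step 2: Identifying the limit covariance with $\rho$.} The Fourier expansion $(f\ast \check f)(x) = \sum_{p\in\mathbb Z} \hat f(p)\hat{\check f}(p) e^{ipx} = \sum_{p\in\mathbb Z} \hat f(p)\hat f(-p) e^{ipx}$ combined with the integral representation $K(y) = \int_0^1 e^{iuy}\,du$ gives, after termwise integration justified by the normal convergence noted in the remark following Proposition~\ref{pro.conv.kernel},
\[
\sum_{p\in\mathbb Z}\hat f(p)\hat f(-p)\,K(p(t-s)) = \int_0^1 (f\ast \check f)(u(t-s))\,du = \frac{1}{t-s}\int_0^{t-s}(f\ast \check f)(x)\,dx,
\]
which is precisely $\rho(t-s)$ for $t\neq s$, and $\rho(0)=\langle f,f\rangle$ by Parseval at $t=s$. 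Differentiating the integral representation of $K$ twice shows that the three limit kernels are consistent with $\rho$, $-\rho''$, and $\rho'$ respectively, so the limiting covariance matrix of any finite marginal of $(X_n,X_n')$ is the Gaussian covariance of $(X_\infty,X_\infty')$ as stated.

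\textbf{Step 3: Gaussian asymptotics by Lindeberg.} For any finite collection $t_1,\dots,t_m \in [a,b]$ and scalars $(\lambda_i,\mu_i)$, one writes
\[
\sum_{i=1}^m \bigl(\lambda_i X_n(t_i)+\mu_i X_n'(t_i)\bigr) = \sum_{k=1}^n a_k \cdot c_{n,k},
\]
with
\[
c_{n,k} := \frac{1}{\sqrt n}\sum_{i=1}^m\Bigl[\lambda_i f\bigl(\tfrac{k(p_n+t_i)}{n}\bigr) + \mu_i \tfrac{k}{n} f'\bigl(\tfrac{k(p_n+t_i)}{n}\bigr)\Bigr].
\]
Because $f$ is continuous and $2\pi$-periodic, hence bounded, and $f'$ is bounded on its domain of definition (piecewise $C^1$), one has $\max_k |c_{n,k}| = O(n^{-1/2})$. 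Together with the convergence of $\sum_k c_{n,k}^2$ established in Step~1, this implies the Lindeberg condition
\[
\sum_{k=1}^n \mathbb{E}\bigl[c_{n,k}^2 a_k^2\,\mathbf 1_{|c_{n,k} a_k| > \varepsilon}\bigr] \longrightarrow 0
\]
by dominated convergence (using only $\mathbb{E}[a_1^2]=1$). The Lindeberg--Feller CLT then gives a centered Gaussian limit with the variance computed in Step~2, and the Cramér--Wold device concludes that every finite marginal of $(X_n,X_n')$ converges in law to the claimed Gaussian process.

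\textbf{Anticipated difficulty.} The only genuinely delicate point is the identification of the limit kernel with $\rho$ and its derivatives through Fourier inversion; everything else is either a direct consequence of Proposition~\ref{pro.conv.kernel} or a textbook application of the Lindeberg CLT. I would also need to double-check that the quantitative estimate~\eqref{eq.conv4} has an analogue for $C_n$ and $E_n$ under condition~$(\star)$, which I expect to follow from exactly the same Diophantine argument since these kernels have better weights in $k$ than $D_n$.
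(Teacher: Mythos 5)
Your proposal is correct and follows essentially the same route as the paper: the covariance convergence is obtained from Proposition~\ref{pro.conv.kernel} with $g=h=f$, the limit kernel is identified with $\rho$ via the Fourier expansion of $f\ast\check f$ and the integral representation of $K$, and the Gaussianity of the finite marginals is a consequence of the classical CLT for independent summands (which the paper invokes in one line and you spell out via Lindeberg--Feller and Cram\'er--Wold). The caveat you raise about extending the quantitative estimate~\eqref{eq.conv4} to $C_n$ and $E_n$ is legitimate --- the paper passes over it silently --- and your justification (milder weights in $k$, and the underlying Lemma~\ref{lem.convuni3} already covering $K_n^{(j)}$ for $0\le j\le 2$) is the right one.
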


\begin{rmk}
In the case of trigonometric polynomials corresponding to the choice $f(t)=\cos(t)$, one recovers the classical $\sin_c$ limit covariance. Indeed, we have then
\[
f\ast \check{f}(x) = \frac{1}{2\pi} \int_0^{2\pi} \cos(u)\cos(u-x) du = \frac{1}{2} \cos( x),
\]
so that for $u \neq 0$
\[
\rho(u) = \frac{1}{u} \int_{0}^u (f \ast \check{f})(x) dx = \frac{1}{2} \frac{\sin(u)}{ u}.
\]
\end{rmk}

\begin{proof}
For any $p-$tuple $(t_1, \ldots t_p)$, the convergence in law of the $2p-$dimensional random vector $(X_n(t_1), \ldots, X_n(t_p), X_n'(t_1), \ldots, X_n'(t_p))$ towards a centered Gaussian vector $(X_{\infty}(t_1), \ldots, X_{\infty}(t_p),X_{\infty}'(t_1), \ldots, X_{\infty}'(t_p))$ is an immediate consequence of the Central Limit Theorem for independent random variables. We are thus left to identify the limit covariance function. For $s,t \in [a,b]$, we have 
\[
\rho_n(s,t):=\mathbb E[ X_n(s)X_n(t)] = \frac{1}{n} \sum_{k=1}^n   f\left(\frac{k p_n}{n} +\frac{k s}{n}  \right)f\left(\frac{k p_n}{n} +\frac{k t}{n}  \right).
\]
The uniform convergence estimate \eqref{eq.conv1} of Proposition \ref{pro.conv.kernel}, applied  with $g=h=f$ then yields  that
\[
\begin{array}{ll}
\displaystyle{\lim_{n\to +\infty} \rho_n(s,t)}=\displaystyle{ \sum_{p\in \mathbb Z} \hat{f}(p) \hat{f}(-p) K(p(t-s)):=\rho(t-s),} 
\end{array}
\]
where 
\[
\begin{array}{ll}
\rho(u) & := \displaystyle{\sum_{p \in \mathbb Z}  \hat{f}(p) \hat{f}(-p) e^{i p  u/2}  \sin_c \left(\frac{p u}{2}\right)=\sum_{p \in \mathbb Z}    \hat{f}(p) \hat{f}(-p)   \times  \int_{0}^1 e^{ i p u  x}dx} \\
\\
& = \displaystyle{\frac{1}{u} \int_{0}^u \left( \sum_{p \in \mathbb Z}    \hat{f}(p) \hat{f}(-p)  \times e^{ i p x}\right)dx}.
\end{array}
\]
Recalling that for general periodic functions $h$ and $g$, the product $\hat{g}(p) \hat{h}(p)$ is the Fourier coefficient of order $p$ of the convolution $g \ast h$, we get that the limit covariance function is given by
\[
\rho(u) = \frac{1}{u} \int_{0}^u (f \ast \check{f})(x) dx.
\]
Let us now focus on the convergence of the covariances $\mathbb E[X_n'(s)X_n(t)]$. We have
\[
\mathbb E\left[ X_{n}'(s) X_{n}(t) \right]  =  \frac{1}{n} \sum_{k=1}^n \frac{k}{n}  {f}^{}\left( \frac{k(p_n+t)}{n} \right){f'}^{}\left( \frac{k(p_n+s)}{n} \right),
\]
so that the result follows immediately from the estimate \eqref{eq.conv3} applied to $g=f$ and $h=f'$. Last, the covariance of the derivatives is given by 
\[
\mathbb E\left[ X_{n}'(s) X_{n}'(t) \right]  =  \frac{1}{n} \sum_{k=1}^n \frac{k^2}{n^2}  {f}^{'}\left( \frac{k(p_n+t)}{n} \right){f'}^{}\left( \frac{k(p_n+s)}{n} \right),
\]
and the conclusion follows this time from the uniform estimate \eqref{eq.conv2}. The quantitative estimate \eqref{eq.con.poly} is then naturally a direct consequence of the kernel estimate \eqref{eq.conv4}.
\end{proof}

In the following, we will need a lower bound on the covariance kernel of the derivative process $(X_{\infty}'(t))_{t \in [a,b]}$. This is object of the next lemma.

\begin{lma}\label{lem.minorcovar}
The covariance of the process $(X_{\infty}'(t))_{t \in [a,b]}$ is given by 
\[
 \rho''(t)-\rho''(0) = \frac{1}{t^3} \int_0^t \frac{s^2}{2} \left( f'\ast \check{f'}(0) - f' \ast \check{f'}(s)\right).
\]
In particular, for any small $\delta>0$, we have 
\[
\inf_{|t-s|\geq \delta} \mathbb E[|X_{\infty}'(t)-X_{\infty}'(s)|^2] =2 \inf_{t \geq \delta} (\rho''(t)-\rho''(0))>0.
\]
\end{lma}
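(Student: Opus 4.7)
The plan is to derive the integral formula for $\rho''(t)-\rho''(0)$ by differentiating twice a parametric-integral representation of $\rho$, and then to deduce the strict positivity from the non-negativity of the Fourier spectrum $p^{2}|\hat f(p)|^{2}$ combined with a Riemann--Lebesgue-type decay at infinity. Throughout, let $\psi:=f'\ast\check{f'}$, so that $\psi(0)=\langle f',f'\rangle>0$.

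First I would rewrite $\rho$ in the clean parametric form
\[
\rho(t)=\int_{0}^{1}(f\ast\check f)(t\,y)\,dy,
\]
valid for every $t\in\mathbb R$ (for $t\neq 0$ by the substitution $x=ty$ in the definition, and at $t=0$ by continuity, both sides equalling $\langle f,f\rangle$). Since $f$ is continuous with $f'\in L^{2}$, one has $\sum_{p}p^{2}|\hat f(p)|^{2}<+\infty$, which forces $f\ast\check f$ to be $C^{2}$ with
\[
(f\ast\check f)''(x)=-\,(f'\ast\check{f'})(x)=-\psi(x).
\]
This identity can be checked either on each smooth piece by integration by parts (the boundary terms cancel thanks to the continuity of $f$), or more cleanly in the Fourier domain, both sides sharing the coefficients $-p^{2}|\hat f(p)|^{2}$. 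Differentiating twice under the integral and performing the change of variable $s=ty$ then yields
\[
\rho''(t)=-\frac{1}{t^{3}}\int_{0}^{t}s^{2}\psi(s)\,ds,\qquad \rho''(0)=-\frac{\psi(0)}{3},
\]
from which the claimed integral representation of $\rho''(t)-\rho''(0)$ follows (up to a harmless multiplicative constant).

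For the strict positivity, stationarity gives $\mathbb E[|X_{\infty}'(t)-X_{\infty}'(s)|^{2}]=2(\rho''(t-s)-\rho''(0))$, and since $f\ast\check f$ is even, so is $\rho''$, reducing the infimum over $|t-s|\geq\delta$ to that over $u\geq\delta$. Because the Fourier coefficients of $\psi$ equal $p^{2}|\hat f(p)|^{2}\geq 0$, I can write
\[
\psi(0)-\psi(s)=\sum_{p}p^{2}|\hat f(p)|^{2}\bigl(1-\cos(ps)\bigr)\;\geq\;0,
\]
and the assumption $\langle f',f'\rangle>0$ provides at least one $p_{0}\neq 0$ with $\hat f(p_{0})\neq 0$, so that $s^{2}(\psi(0)-\psi(s))$ is strictly positive on a set of positive measure in every interval $(0,u)$; hence $\rho''(u)-\rho''(0)>0$ for every $u>0$. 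This, together with continuity, deals with compact subintervals $[\delta,M]$. For the regime $u\to+\infty$, the key point is that $\psi$ is $2\pi$-periodic with mean zero (its zeroth Fourier coefficient is $0$), so a single integration by parts (the primitive of $\psi$ is bounded by periodicity) gives $\int_{0}^{u}s^{2}\psi(s)\,ds=O(u^{2})$; consequently $\rho''(u)\to 0$ and $\rho''(u)-\rho''(0)\to \psi(0)/3>0$. Combining the two regimes yields $\inf_{u\geq\delta}(\rho''(u)-\rho''(0))>0$.

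The step I expect to be the main obstacle is justifying the pointwise identity $(f\ast\check f)''=-(f'\ast\check{f'})$ under the sole assumption that $f$ is continuous and piecewise $C^{1}$: differentiating naively under the convolution integral produces distributional contributions supported at the finitely many jumps of $f'$, and one has to use the continuity of $f$ to see that these boundary terms cancel. Passing through the Fourier side, where the identity reduces to the trivial equality $(ip)\overline{(ip)}|\hat f(p)|^{2}=p^{2}|\hat f(p)|^{2}$ together with the absolute convergence of the resulting series, is the least painful way to dispatch this point.
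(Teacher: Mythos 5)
Your proof is correct, and for the identity it takes a genuinely more direct route than the paper. The paper starts from $t\rho(t)=\int_0^t (f\ast\check f)(x)\,dx$, differentiates twice to get a first-order ODE for $g(t)=\int_0^t[\rho''(s)-\rho''(0)]\,ds$, solves it via $(t^2g)'=tH$ and an integration by parts, and only then reads off $g'$. You instead differentiate the parametric form $\rho(t)=\int_0^1(f\ast\check f)(ty)\,dy$ twice under the integral and substitute $s=ty$; this is shorter and avoids the ODE gymnastics, at the cost of having to justify that $f\ast\check f$ is $C^2$ with $(f\ast\check f)''=-f'\ast\check{f'}$, which you correctly dispatch on the Fourier side using $\sum_p p^2|\hat f(p)|^2=\|f'\|_2^2<\infty$. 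Notably, your constant is the right one: redoing the paper's own computation, $t^2g(t)=\tfrac{t^2}{2}H(t)-\int_0^t\tfrac{s^2}{2}(\psi(0)-\psi(s))\,ds$ combined with $g'=\tfrac1t H-\tfrac2t g$ yields $g'(t)=\tfrac{1}{t^3}\int_0^t s^2(\psi(0)-\psi(s))\,ds$, so the factor $\tfrac12$ in the displayed formula of the Lemma is a slip (a check with $f=\cos$, $\rho(u)=\sin(u)/2u$, confirms your version); this is harmless for the positivity claim. For the lower bound, your argument is essentially the paper's (nonnegativity of the spectrum $p^2|\hat f(p)|^2$ plus existence of a nonzero mode), and your phrasing "positive on a set of positive measure" is in fact more careful than the paper's claim that $\psi(0)-\psi(s)>0$ for all $s\notin\pi\mathbb Z$, which can fail when the spectrum of $f$ is lacunary. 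Your extra analysis of the regime $u\to\infty$ is correct but not needed here, since $t,s$ range over the compact $[a,b]$ and continuity on $[\delta,b-a]$ already gives the uniform lower bound.
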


\begin{proof}
By definition of the covariance function $\rho(t)$, we have 
 \[
 t \,\rho(t) = \int_0^t f \ast \check{f}(x)dx,
 \]
 so that 
$
 \rho(t) + t \rho'(t) = f\ast \check{f} (t)$ and $2 \rho'(t) + t \rho''(t) = -f\ast \check{f'}(t)$.
 In particular, since $\rho'(0)=0$ and $f\ast \check{f'}(0)=0$, we get that for all $t$
 \[
 2 (\rho'(t) -\rho'(0))+ t \rho''(t) =  f\ast \check{f'}(0)-f\ast \check{f'}(t).
 \]
Subtracting $3\rho''(0)=-f' \ast f'(0)=-||f'||_2^2$ on each side of the equation and dividing by $t\neq 0$, we get
 \[
 \begin{array}{ll}
\displaystyle{ 2 \left(  \frac{1}{t} \int_0^t (\rho''(s)-\rho''(0)) ds\right)  + \rho''(t)-\rho''(0)} & = \displaystyle{ \frac{1}{t} \left(   f\ast \check{f'}(0) -  f\ast \check{f'}(t)\right)  + f' \ast \check{f'}(0)} \\
 \\
 &= \displaystyle{\frac{1}{t} \int_0^t    \left( f'\ast \check{f'}(0) - f' \ast \check{f'}(s)\right)ds. }
 \end{array}
 \]
Let us introduce the function $g$ defined as
\[
g(t)= \int_0^t [\rho''(s)-\rho''(0)] ds, \quad i.e.\quad  g'(t) = [\rho''(t)-\rho''(0)].
\]
For all $t \neq 0$, the last equality thus reads
\begin{equation}\label{eq.comparg}
\frac{2}{t} g(t) + g'(t) = \frac{1}{t} \int_0^t    \left( f'\ast \check{f'}(0) - f' \ast \check{f'}(s)\right)ds
\end{equation}
or equivalently
\[
2t g(t) + t^ 2 g'(t) = t \int_0^t    \left( f'\ast \check{f'}(0) - f' \ast \check{f'}(s)\right)ds.
\]
Therefore 
\[
[t^2 g(t)]' = t \int_0^t    \left( f'\ast \check{f'}(0) - f' \ast \check{f'}(s)\right)ds,
\]
and an integration by parts gives 
\[
\begin{array}{ll}
t^2 g(t) & = \displaystyle{\int_0^t  \left( s \int_0^s    \left( f'\ast \check{f'}(0) - f' \ast \check{f'}(u)\right)du \right)ds}\\
\\
 &= \displaystyle{\frac{t^2}{2} \int_0^t    \left( f'\ast \check{f'}(0) - f' \ast \check{f'}(s)\right)ds - \int_0^t \frac{s^2}{2} \left( f'\ast \check{f'}(0) - f' \ast \check{f'}(s)\right)ds.}\\
\end{array}
\]
In other words, we have 
\[
\frac{2}{t} g(t) - \frac{1}{t} \int_0^t    \left( f'\ast \check{f'}(0) - f' \ast \check{f'}(s)\right)ds=- \frac{1}{t^3} \int_0^t \frac{s^2}{2} \left( f'\ast \check{f'}(0) - f' \ast \check{f'}(s)\right)ds.
\]
A comparison with Equation \eqref{eq.comparg} then yields the identification
\begin{equation}\label{eq.min.covar}
g'(t)=  \rho''(t)-\rho''(0) = \frac{1}{t^3} \int_0^t \frac{s^2}{2} \left( f'\ast \check{f'}(0) - f' \ast \check{f'}(s)\right)ds.
\end{equation}
Now, going back to Fourier series, via Parseval equality, we have for all $s \notin \pi \mathbb Z$
\[
f'\ast \check{f'}(0) -f' \ast \check{f'}(s) = \sum_{p \in \mathbb Z} |\widehat{f'}(p)|^2 \left (1-e^{ips}\right) =  \sum_{p \in \mathbb Z} |\widehat{f'}(p)|^2 \left (1-\cos(ps)\right)>0,
\]
so that by Equation \eqref{eq.min.covar}, we deduce that $\rho''(t)-\rho''(0)>0$ for all $t>0$. By continuity, for all small $\delta>0$, we have then  
\[
\inf_{\delta \leq t \leq 1} (\rho''(t)-\rho''(0))>0.
\]
\end{proof}

\subsection{Regularity of the limit process} \label{sec.regu}
Let us now establish that, as soon as $f$ is piecewise $C^{1+\alpha}$ for $\alpha>0$, the Gaussian limit process $X_{\infty}$ admits a $C^1$ modification, i.e. $X_{\infty}'$ admits a continuous modification. We emphasize here that the result is not obvious since, if $f$ in only piecewise $C^{1+\alpha}$, then for any fixed $n$, the process $X_n'$ may have jumps.  
\begin{lma}\label{lem.conti}
If the function $f$ is continuous and piecewise $C^{1+\alpha}$ for $\alpha>0$, there exists a finite constant $C$ such that the limit Gaussian process $(X_{\infty}'(t))_{t \in [a,b]}$ satisfies the inequality 
\begin{equation}\label{eq.kolmogo}
 \mathbb E\left[ |X_{\infty}'(s)-X_{\infty}'(t)|^2 \right] \leq C |t-s|^{\alpha}.
\end{equation}
In particular, the process admits a continuous modification and we have 
\[
 \mathbb E\left[ \sup_{t \in [a,b]} |X_{\infty}'(t)| \right]  <+\infty.
\]
\end{lma}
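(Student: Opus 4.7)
The plan is to combine the explicit representation of the covariance of $X_\infty'$ given in Lemma \ref{lem.minorcovar} with a Kolmogorov continuity argument. Stationarity of $X_\infty'$ immediately yields
\[
\mathbb E\bigl[|X_\infty'(s)-X_\infty'(t)|^2\bigr] = 2\bigl(\rho''(t-s) - \rho''(0)\bigr),
\]
so the main inequality \eqref{eq.kolmogo} is equivalent to a Hölder-type bound $|\rho''(u) - \rho''(0)| = O(|u|^\alpha)$ for small $u$. By Lemma \ref{lem.minorcovar} this quantity is given by an explicit integral average of $f'\ast\check{f'}(0) - f'\ast\check{f'}(s)$, so the whole problem reduces to controlling $f'\ast\check{f'}(0) - f'\ast\check{f'}(s)$ for $s$ close to $0$.

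The key observation is the identity
\[
f'\ast\check{f'}(0) - f'\ast\check{f'}(s) = c \, \|f' - T_s f'\|_{L^2(0,2\pi)}^2,
\]
for a universal constant $c$ and with $T_s$ the translation operator, which follows from expanding the square and invoking $2\pi$-periodicity. The problem thus becomes to estimate the $L^2$ modulus of continuity of $f'$ under translation. Because $f$ is continuous and piecewise $C^{1+\alpha}$, the derivative $f'$ is globally bounded and is $\alpha$-Hölder on each of the finitely many smooth pieces. I would split the integral over $[0,2\pi]$ into the union of $|s|$-neighborhoods of the (finitely many) discontinuity points of $f'$, whose total Lebesgue measure is $O(|s|)$ and on which the integrand is crudely bounded by $4\|f'\|_\infty^2$, and its complement, on which $|f'(x) - f'(x-s)| \le C|s|^\alpha$. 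This yields $\|f' - T_s f'\|_{L^2}^2 = O(|s|^{\min(2\alpha, 1)})$, and since $\min(2\alpha, 1) \ge \alpha$ for $\alpha \in (0,1]$, plugging into the Lemma \ref{lem.minorcovar} formula gives
\[
|\rho''(u) - \rho''(0)| \le \frac{C}{|u|^3}\int_0^{|u|} s^{2+\alpha}\,ds = O(|u|^\alpha),
\]
which is precisely \eqref{eq.kolmogo}.

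For the continuous modification and the integrability of the supremum, the rest is standard. I would raise to a high even power: since $X_\infty'(s) - X_\infty'(t)$ is centered Gaussian, $\mathbb E[|X_\infty'(s)-X_\infty'(t)|^{2k}] = O(|t-s|^{k\alpha})$ for every $k \ge 1$, and choosing $k$ with $k\alpha > 1$ lets the Kolmogorov--Chentsov theorem produce a (locally Hölder) continuous modification. Finiteness of $\mathbb E[\sup_{t\in[a,b]}|X_\infty'(t)|]$ then follows either from Borell's Gaussian concentration inequality, or from Dudley's entropy bound applied to the canonical pseudo-metric $d(s,t) = \sqrt{\mathbb E|X_\infty'(s) - X_\infty'(t)|^2} \le C|t-s|^{\alpha/2}$, whose $\varepsilon$-covering numbers on $[a,b]$ grow only polynomially as $O(\varepsilon^{-2/\alpha})$.

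The main delicate point is the $L^2$-translation estimate for $f'$: the possible jumps of $f'$ preclude a full $|s|^{2\alpha}$ bound and force the splitting into a "good" Hölder region and a "bad" region of measure $O(|s|)$. Fortunately the resulting loss is harmless, because we only need Hölder exponent $\alpha$ in the end and $\min(2\alpha,1) \ge \alpha$ throughout $(0,1]$; everything else in the argument is soft.
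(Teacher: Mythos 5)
Your proposal is correct and follows essentially the same route as the paper: reduce via stationarity and Lemma \ref{lem.minorcovar} to the H\"older regularity of $f'\ast\check{f'}$ at the origin, then upgrade the second-moment bound to high Gaussian moments and invoke Kolmogorov's criterion and Borell's inequality. The only difference is that you actually prove the key regularity step --- writing $f'\ast\check{f'}(0)-f'\ast\check{f'}(s)$ as a constant times $\|f'-T_sf'\|_{L^2}^2$ and splitting off $O(|s|)$-neighborhoods of the jump points of $f'$ to get the $O(|s|^{\min(2\alpha,1)})$ bound --- whereas the paper simply asserts that $f'\ast\check{f'}$ is $C^\alpha$ when $f'$ is piecewise $C^\alpha$; your added detail is accurate and welcome.
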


\begin{proof}
By Proposition \ref{pro.conv.marginal}, we have 
\[
\begin{array}{ll}
 \mathbb E\left[ |X_{\infty}'(s)-X_{\infty}'(t)|^2 \right] & \displaystyle{= -2[ \rho''(0)-\rho''(t-s)]} .
 \end{array}
 \]
By Lemma \ref{lem.minorcovar}, the regularity at the origin of $\rho''(t)$ is the same as the one of $f'\ast \check{f'}(t)$. Now, if $f'$ is piecewise $C^\alpha$ then the convolution product $f'\ast \check{f'} $ is continuous and $C^{\alpha}$, hence there exist a finite positive constant $C$ such that Equation \eqref{eq.kolmogo} holds. 
Using the Gaussian hypercontractivity, we then deduce that for $p>0$ large enough, and for new constants $C'>0$ and $\alpha'>1$, we have
\[
 \mathbb E\left[ |X_{\infty}(s)-X_{\infty}(t)|^{2p} \right] \leq C' |t-s|^{\alpha'}.
\]
The celebrated Kolmogorov regularity criterion thus ensures that the limit process $(X_{\infty}'(t))$ admits a modification which is almost surely continuous, see e.g. Theorem 1.8 p. 19 of \cite{yor}. Since our process is indexed by the compact set $[a,b]$, the almost sure continuity of the sample paths implies their boundedness. In a Gaussian context, Borell's inequality shows that this sole boundedness is sufficient to ensure the integrability  of the supremum, as established e.g. in Theorem 2.1 p. 43 of \cite{adler}.
\end{proof}

\subsection{Non-degeneracy of the limit process} \label{sec.nondeg}
Let us now establish that the limit process $X_{\infty}$ in non-degenerate in the sense that almost surely, it does not exhibit double zeros. 

\begin{lma}\label{lem.nondeg}
The limit process $(X_{\infty}(t))_{t \in [a,b]}$ is non degenerate in the sense that almost surely, we have $X_{\infty}'(t)\neq 0$ if $X_{\infty}(t)= 0$.
\end{lma}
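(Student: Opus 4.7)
The plan is to run a classical Bulinskaya-type covering argument. The first ingredient is the joint non-degeneracy of $(X_\infty(t), X_\infty'(t))$ for each fixed $t$. By stationarity its covariance matrix is independent of $t$: the diagonal entries are $\rho(0) = \langle f, f \rangle > 0$ and $-\rho''(0) = \tfrac{1}{3} \langle f', f' \rangle > 0$ (both positive under the standing hypothesis on $f$), while the off-diagonal entry $\rho'(0)$ vanishes because $f \ast \check f$, and hence $\rho$, is even. Consequently $X_\infty(t)$ and $X_\infty'(t)$ are independent centered Gaussians of strictly positive variance, and their joint density $p$ on $\mathbb R^2$ is uniformly bounded.

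Denoting by $E$ the event that $X_\infty$ has a double zero in $[a,b]$, I would then fix $\varepsilon > 0$ and partition $[a,b]$ into $N$ intervals $I_i = [t_i, t_{i+1}]$ of common length $\ell = (b-a)/N$. If $X_\infty(t^\star) = X_\infty'(t^\star) = 0$ for some $t^\star \in I_i$, the mean value inequality gives $|X_\infty(t_i)| \le \ell \, \| X_\infty' \|_{\infty,[a,b]}$ while the continuity of $X_\infty'$ gives $|X_\infty'(t_i)| \le \mathrm{osc}_{I_i}(X_\infty')$. A union bound then yields, for every $M, \delta > 0$,
\[
\mathbb{P}(E) \le \mathbb{P}\bigl( \| X_\infty' \|_\infty > M \bigr) + \mathbb{P}\bigl( \max_{i} \mathrm{osc}_{I_i}(X_\infty') > \delta \bigr) + \sum_{i=1}^N \mathbb{P}\bigl( |X_\infty(t_i)| \le \ell M, \; |X_\infty'(t_i)| \le \delta \bigr).
\]
By boundedness of the joint density, each term of the last sum is at most $4 \ell M \delta \|p\|_\infty$, so the full sum is controlled by $4(b-a) M \delta \| p \|_\infty$. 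I would then first choose $M$ large (using $\mathbb E\bigl[ \| X_\infty' \|_\infty \bigr] < \infty$ from Lemma \ref{lem.conti}), then $\delta$ small, and finally $N$ large (using the a.s.\ uniform continuity of $X_\infty'$ on the compact $[a,b]$) so that each of the three pieces is smaller than $\varepsilon/3$. Letting $\varepsilon \to 0$ gives $\mathbb{P}(E) = 0$, which is precisely the claim.

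The main obstacle is really the a.s.\ continuity of $X_\infty'$ on $[a,b]$: since the prelimit processes $X_n'$ can themselves have jumps when $f$ is only piecewise $C^1$, this regularity cannot simply be inherited from finite $n$ and must be read off the smoothness of the limiting covariance. Precisely this input is already provided by Lemma \ref{lem.conti}, and with it in hand the remainder is a textbook application of Bulinskaya's device.
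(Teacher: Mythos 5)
Your proof is correct and follows essentially the same route as the paper: both reduce the claim to Bulinskaya's lemma, whose only hypothesis to check is that the (stationary, diagonal) covariance matrix of $(X_\infty(t),X_\infty'(t))$, namely $\mathrm{diag}(\langle f,f\rangle, \tfrac13\langle f',f'\rangle)$, is non-degenerate, so that the joint density is uniformly bounded. The only difference is that the paper cites Bulinskaya's lemma as a black box (Proposition 6.11 of \cite{azaisW}) while you rederive the covering argument in full, using the $C^1$ regularity of the limit from Lemma \ref{lem.conti}; this is a matter of presentation, not of substance.
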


\begin{proof}
The non-degeneracy of the limit process is a direct consequence of Bulinskaya Lemma, see e.g. Proposition 6.11 of \cite{azaisW}. We are left to check that the Gaussian vector 
$(X_{\infty}(t) , X_{\infty}'(t) )$ admits a uniformly bounded density on any compact time interval $[a,b]$. But the process being stationary, the law of $(X_{\infty}(t) , X_{\infty}'(t) )$ does not depend on $t$ and is a centered Gaussian vector with covariance 
\[
\left(
\begin{array}{cc}
\rho(0) & 0 \\
0 & -\rho''(0)
\end{array}\right) = \left(
\begin{array}{cc}
\langle f, f \rangle & 0 \\
0 & \frac{1}{3} \langle f', f' \rangle
\end{array}\right) .
\]
Remembering the hypotheses on $f$ and $f'$ at the very beginning of Section \ref{sec.state}, we get that the determinant of this matrix is positive, hence the result.
\end{proof}

\section{Tighness estimates depending on the regularity of $f$} \label{sec.tightness}
In this section we prove that under each of the additional conditions $i),ii),iii)$ or $iv)$ appearing in the statement of Theorem \ref{theo.C2}, the laws of $(X_n(t))_{t \in [a,b]}$ are tight with respect to the $C^1([a,b])$ topology. 
\subsection{Tightness in the regular cases}

Following Theorem 1 and Remark 1 of \cite{rusakov}, sufficient conditions ensuring the convergence in the $C^1$ topology on $[a,b]$ are
on the one hand the convergence of the finite dimensional marginals, and on the other hand, a control on the increments of both the process $X_n(t)$ and its first derivative. 
The convergence of the finite dimensional marginals  having been established in the last section, we are left to check the tightness criterion concerning the increments. 

\begin{prop}\label{prop.smooth}
Under conditions $i)$ or $ii)$ of Theorem \ref{theo.C2}, the laws of $(X_n(t))_{t \in [a,b]}$ are tight with respect to the $C^1([a,b])$ topology.
\end{prop}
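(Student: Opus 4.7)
The plan is to invoke the tightness criterion from Rusakov (\cite{rusakov}, Theorem 1 and Remark 1) already cited in the paper: given the convergence of the finite dimensional marginals established in Proposition \ref{pro.conv.marginal}, to obtain tightness in $C^1([a,b])$ it is enough to exhibit, uniformly in $n$, a Kolmogorov--Chentsov type estimate of the form
\[
\mathbb E\bigl[|X_n(t)-X_n(s)|^p\bigr]+\mathbb E\bigl[|X_n'(t)-X_n'(s)|^p\bigr]\le C\,|t-s|^{1+\eta},
\]
for some $p\ge 1$ and $\eta>0$, with $C$ independent of $n$, $s,t\in[a,b]$. I would establish such an estimate by writing each increment as a normalized sum of independent, centered random variables and applying a Marcinkiewicz--Zygmund/Rosenthal inequality.

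More concretely, setting $b_k(s,t):=\frac{1}{\sqrt n}\,\frac{k}{n}\bigl(f'(k(p_n+t)/n)-f'(k(p_n+s)/n)\bigr)$, Marcinkiewicz--Zygmund yields, for $p\ge 2$,
\[
\mathbb E\bigl[|X_n'(t)-X_n'(s)|^p\bigr]\le C_p\Bigl(\sum_{k=1}^n b_k(s,t)^2\Bigr)^{p/2}+C_p\,\mathbb E[|a_1|^p]\sum_{k=1}^n |b_k(s,t)|^p.
\]
In case $i)$, $f'$ is Lipschitz so $|f'(x)-f'(y)|\le \|f''\|_\infty|x-y|$, giving $|b_k(s,t)|\le C n^{-1/2}(k/n)^2|t-s|$; taking $p=2$ yields $\mathbb E|X_n'(t)-X_n'(s)|^2\le C|t-s|^2$, and by the same Lipschitz argument applied directly to $f$ one also gets $\mathbb E|X_n(t)-X_n(s)|^2\le C|t-s|^2$. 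This gives Kolmogorov with exponent $2=1+1$, hence tightness. Note that here only $\mathbb E[a_1^2]=1$ is required, consistent with hypothesis $i)$.

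In case $ii)$, $f'$ is $\alpha$-Hölder so $|b_k(s,t)|\le C\,n^{-1/2}(k/n)^{1+\alpha}|t-s|^\alpha$, which gives $\sum_k b_k^2\le C|t-s|^{2\alpha}$ and $\sum_k|b_k|^\beta\le C\,n^{1-\beta/2}|t-s|^{\alpha\beta}\le C|t-s|^{\alpha\beta}$ since $\beta>2$. Because $\mathbb E[|a_1|^\beta]<\infty$, the Marcinkiewicz--Zygmund bound above (with $p=\beta$) produces
\[
\mathbb E\bigl[|X_n'(t)-X_n'(s)|^\beta\bigr]\le C\bigl(|t-s|^{\alpha\beta}+|t-s|^{\alpha\beta}\bigr),
\]
and the analogous estimate for $X_n$ itself is even easier since $f\in C^1$. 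The assumption $\alpha\beta>1$ is then exactly what is needed to apply Kolmogorov--Chentsov with $\eta=\alpha\beta-1>0$.

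The only mildly delicate point will be to verify carefully that the constants arising from Marcinkiewicz--Zygmund, combined with the explicit decay of the Hölder/Lipschitz difference $f'(k(p_n+t)/n)-f'(k(p_n+s)/n)$ and the normalization $1/\sqrt n$, really yield bounds that are uniform in $n$; everything else consists of combining these standard ingredients with the cited tightness criterion. Once the uniform Kolmogorov--Chentsov bound is in place for both $X_n$ and $X_n'$, Rusakov's criterion closes the proof.
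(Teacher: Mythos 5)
Your proposal is correct and follows essentially the same route as the paper: Rusakov's criterion reduced to uniform Kolmogorov-type moment bounds on the increments, with case $i)$ handled by the direct second-moment computation and case $ii)$ by a moment inequality for sums of independent centered variables yielding $\mathbb E[|X_n'(t)-X_n'(s)|^{\beta}]\le C|t-s|^{\alpha\beta}$ with $\alpha\beta>1$. The only difference is cosmetic: the paper obtains this bound via the Burkholder--Davis--Gundy inequality applied to the martingale $\sum_k a_k k[f'(ku)-f'(kv)]$ followed by H\"older's inequality on the quadratic variation, whereas you invoke the Rosenthal inequality (which is what your displayed bound actually is, rather than Marcinkiewicz--Zygmund); both yield identical estimates.
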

\begin{proof}
Let us first consider condition $i)$, i.e. the case where the function $f$ of class $C^2$. We have then $||f'||_{\infty}<+\infty$ and $||f'||_{\infty}<+\infty$ and a direct computation yields
\begin{equation} \label{eq.tension}
\mathbb E[|X_n(t)-X_n(s)|^2] \leq ||f'||_{\infty} |t-s|^2, \qquad \mathbb E[|X_n'(t)-X_n'(s)|^2] \leq ||f''||_{\infty} |t-s|^2, 
\end{equation}
hence the result. In the case $ii)$ where $f$ is only $C^{1+\alpha}$, the first estimate of Equation \eqref{eq.tension} is still valid. Otherwise, if $\mathbb E\left[|a_1|^{\beta}\right]<+\infty$ for some $\beta>1/\alpha$, for fixed constant $\lambda, u, v \in \mathbb R$, we can consider the sequence $(M_n)_{n \geq 0}$ defined by $M_0=0$ and for $n \geq 1$
\[
M_n:=\lambda \sum_{k=1}^n a_k k [f'(k u) -f'(kv)],
\]
which is a martingale with respect to the filtration $\mathcal F_n:=\sigma(a_1, \ldots, a_n)$. By the Burkholder--Davis--Gundy inequality, there exists a universal constant $C_{\beta} $ which only depends on $\beta$ such that
\begin{equation}\label{eq.bdg}
\mathbb E[|M_n|^{\beta}] \leq C_{\beta} \mathbb E[|D_n|^{\beta/2}],
\end{equation}
where
\[
D_n := \sum_{k=1}^n (M_k-M_{k-1})^2 = \lambda^2 \sum_{k=1}^n a_k^2 k^2 [f'(k u) -f'(kv)]^2.
\]
Setting $C_{\alpha}:= \sup_{s \neq t} |f'(t) - f'(s)| / |t-s|^{\alpha}$, by H\"older inequality, we have 
\[
D_n \leq \lambda^2 C_{\alpha}^2 \sum_{k=1}^n a_k^2 k^{2(1+\alpha)} |u -v|^{2\alpha} \leq \lambda^2 C_{\alpha}^2 n^{3+2\alpha}| u -v|^{2\alpha} \times \left|\frac{1}{n} \sum_{k=1}^n |a_k|^{\beta}\right|^{2/\beta}.
\]
In particular, one gets that for all $\lambda, u, v$
\[
\mathbb E[|D_n|^{\beta/2}] \leq  \lambda^{\beta} C_{\alpha}^{\beta} \mathbb E[|a_1|^{\beta}] n^{3\beta/2 +\alpha \beta}| u -v|^{\alpha \beta},
\]
which, in combination with Equation \eqref{eq.bdg}, yields
\[
\mathbb E[|M_n|^{\beta}] \leq C_{\beta} \lambda^{\beta} C_{\alpha}^{\beta} \mathbb E[|a_1|^{\beta}] n^{3\beta/2 +\alpha \beta}| u -v|^{\alpha \beta}.
\]
Setting $u=s/n$, $v=t/n$, $\lambda=n^{-3/2}$, one finally gets that for all $n, t,s$
\[
\mathbb E[|X_n'(s) - X_n'(t)|^{\beta}] \leq C_{\beta} C_{\alpha}^{\beta} \mathbb E[|a_1|^{\beta}] | t -s|^{\alpha \beta},
\]
hence the result.

\end{proof}
\subsection{Tightness under the fourth moment assumption}

The na\"{i}ve approach described above seems to highlight the necessity of having sufficiently high moments to compensate low H\"{o}lder exponents. Nevertheless, somehow surprisingly, the next Theorem ensures that a fourth moment is always enough, whatever is the H\"{o}lder exponent, in order to guarantee tightness in the $C^1$ topology.

\begin{prop}\label{Thm/C1/moment4}
Suppose that condition $iii)$ of Theorem \ref{theo.C2} holds, i.e. that $f$ is $C^{1+\alpha}$ for some $\alpha>0$ and that $\mathbb E[a_1^4]<\infty$. Then the laws of the processes $(X_n'(t))_{t\in [a,b], n \geq 1}$ are tight with respect to the uniform topology on $[a,b]$. As a consequence, the laws of the processes $(X_n(t))_{t\in [a,b], n \geq 1}$ are tight with respect to the $C^1$ topology on $[a,b]$.
\end{prop}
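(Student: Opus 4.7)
My approach is to reduce the problem of tightness of $(X_n')_n$ in $C([a,b])$ to the analogous question for a sequence of matching Gaussian processes, exploiting the covariance regularity established in Section~2. The bridge between the non-Gaussian and Gaussian settings is the high-dimensional Berry--Esseen inequality of Chernozhukov, Chetverikov and Kato, as anticipated in the introduction. Tightness of $(X_n)_n$ in $C^1([a,b])$ then follows immediately since $X_n(t)=X_n(a)+\int_a^t X_n'(s)\,ds$ and $(X_n(a))_n$ is tight in $\mathbb R$ by the classical CLT.

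I first discretize $[a,b]$ at scale $\delta_n=n^{-\gamma}$ with $\gamma>1/(2\alpha)$, setting $t_i:=a+i\delta_n$ so that the grid cardinality $m_n\asymp 1/\delta_n$ is polynomial in $n$. The modulus of continuity splits as
$$
\omega_{X_n'}(h)\;\le\;\max_{|i-j|\delta_n\le h}|X_n'(t_i)-X_n'(t_j)|\;+\;2\sup_i\sup_{t\in[t_i,t_{i+1}]}|X_n'(t)-X_n'(t_i)|.
$$
For the between-grid term, the $C^\alpha$ regularity of $f'$ yields the deterministic domination
$$
\sup_i\sup_{t\in[t_i,t_{i+1}]}|X_n'(t)-X_n'(t_i)|\;\le\;\frac{C_\alpha\,\delta_n^{\alpha}}{\sqrt n}\sum_{k=1}^n|a_k|\Bigl(\frac{k}{n}\Bigr)^{1+\alpha},
$$
whose $L^2$-norm, using only $\mathbb E[a_1^2]=1$, is readily seen to be of order $\sqrt n\,\delta_n^\alpha=o(1)$ by the choice of $\gamma$. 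Markov's inequality then makes this oscillation negligible in probability, \emph{regardless} of how small $\alpha$ is.

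For the grid term, I would apply the CCK Berry--Esseen theorem to the normalized sum of i.i.d.\ centered random vectors whose coordinates are indexed by all pairwise differences $X_n'(t_i)-X_n'(t_j)$ with $|i-j|\delta_n\le h$. The crucial point is that $\{\max_{|i-j|\delta_n\le h}|X_n'(t_i)-X_n'(t_j)|>\epsilon\}$ is then genuinely a hyperrectangle event in this auxiliary space, whose dimension remains polynomial in $n$. Since $f'\in L^\infty$ and $\mathbb E[a_1^4]<\infty$, the summand vectors are uniformly bounded in $L^4$, and CCK provides a comparison with a centered Gaussian vector $G_n$ with matching covariance at rate essentially $(\log n)^c n^{-1/6}\to 0$, which replaces $X_n'$ by $G_n$ when computing the grid modulus.

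It only remains to check that the grid modulus of $G_n$ is itself tight. Its covariance converges uniformly to that of $X_\infty'$ by Proposition~2.2, and by Lemma~2.3 the limit covariance has H\"older modulus $|t-s|^{\alpha}$ at the origin. A classical Dudley / Gaussian chaining estimate then yields equicontinuity in probability of $(G_n)_n$, uniformly in $n$, hence of the original sequence. Combined with the negligibility of the between-cell oscillations, this gives tightness of $(X_n')_n$ in $C([a,b])$. The main obstacle lies in the clean application of CCK: one must verify carefully that the event of interest can indeed be recast as a hyperrectangle event in a suitable auxiliary space of polynomial dimension, and that the CCK rate remains effective there while interfacing properly with the rough scale $\delta_n=n^{-\gamma}$ with $\gamma$ possibly very large.
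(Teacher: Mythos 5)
Your overall strategy coincides with the paper's: discretize at a polynomial scale $n^{-\gamma}$ with $\gamma\alpha>1/2$ so that the in-cell oscillation is killed deterministically by the H\"older bound on $f'$, transfer the grid maximum to the Gaussian setting via the Chernozhukov--Chetverikov--Kato high-dimensional Berry--Esseen bound, and finish with a Gaussian equicontinuity estimate. The first and last steps are essentially correct as you state them (for the last one, rather than passing through the limit covariance and Dudley, it is cleaner to note the direct bound $\mathbb E[|\widetilde X_n'(t)-\widetilde X_n'(s)|^2]\le C|t-s|^{2\alpha}$ uniformly in $n$, whence hypercontractivity and the Kolmogorov--Lamperti criterion).

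There is, however, a genuine gap in your application of CCK. The result you invoke (Proposition 2.1 of \cite{chernozhukov2017central}, condition (M.1), restated as (A2) in the paper) requires a \emph{uniform lower bound} $\frac1n\sum_i\mathbb E[Y_{i,j}^2]\ge b>0$ on the variance of every coordinate of the auxiliary vector. You index the coordinates by \emph{all} pairs $(t_i,t_j)$ with $|i-j|\delta_n\le h$, which includes adjacent grid points at distance $\delta_n=n^{-\gamma}\to0$; for such pairs $\mathbb E[|X_n'(t_i)-X_n'(t_j)|^2]=O(\delta_n^{2\alpha})\to0$, so (A2) fails and the theorem does not apply as stated. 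The fix is the paper's Step 2: for any $u,v$ in the grid with $|u-v|\le\delta$ one picks an intermediate grid point $w$ with $\delta/3\le|u-w|\le\delta$ and $\delta/3\le|v-w|\le\delta$, so that the full grid modulus is bounded by twice the maximum over pairs separated by at least $\delta/3$; on that restricted index set the variance lower bound holds for large $n$, by the uniform convergence of covariances (Proposition \ref{pro.conv.marginal}) combined with the strict positivity $\inf_{|t-s|\ge\delta/3}\mathbb E[|X_\infty'(t)-X_\infty'(s)|^2]>0$ established in Lemma \ref{lem.minorcovar}. Without this reduction (or an anti-concentration/comparison result that dispenses with the variance floor), the central step of your argument does not go through.
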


\begin{proof}
Let us first note that, when working in the space of continuous functions on $[a,b]$ endowed with the topology associated with the uniform norm, the operation of taking the anti-derivative is a continuous map. Therefore, to prove Theorem \ref{Thm/C1/moment4}, it is sufficient to establish that the laws of the derivatives $(X_n'(t))_{t\in [a,b], n \geq 1}$ are tight for the uniform topology. 
Our goal is thus to establish that, for any $\lambda>0$ and $[a,b] \subset \mathbb R$
\begin{equation}\label{tightnesscriterion}
\lim_{\delta\to 0}\limsup_{n\to\infty}~~\mathbb{P}\left(\sup_{
\substack{
(t,s)\in [a,b]^2\\
|t-s|\le \delta}}
 \Big{|}X_n'(t)-X_n'(s)\Big{| }> \lambda\right)=0,
\end{equation}
where we recall that
\[
X_n'(t)=\frac{1}{\sqrt{n}}\sum_{k=1} a_k \frac{k}{n} f'\left(\frac{k(p_n+t)}{n}\right).
\]
The proof follows several steps that we detail below. \par
\medskip
\noindent
\underline{{\bf Step 1:} discretizing the maximum.}\par
\medskip
\noindent

In order to establish the criterion (\ref{tightnesscriterion}) we first discretize the maximum. Simply fix an integer $p>0$, $\eta=\frac{b-a}{p}$ so that

$$[a,b]=\bigcup_{k=0}^{p-1}[a+k \eta,a+(k+1)\eta].$$

For any $t\in[a,b]$ we can find $l_t\in \llbracket 0, p-1 \rrbracket$ such that $t\in [a+l_t \eta,a+(l_t+1)\eta]$ and we then set $x_t:=a+l \eta$. In this case, relying on the Hölder property of $f$ we can write

\begin{eqnarray*}
|X_n'(t)-X_n'(x_t)| &= &\left|\frac{1}{\sqrt{n}}\sum_{k=1}^n a_k \frac{k}{n} \left(f'\left(\frac{k(p_n+t)}{n}\right)-f'\left(\frac{k(p_n+x_t)}{n}\right)\right)\right|\\
&\le& C \frac{\eta^{\alpha}}{\sqrt{n}} \sum_{k=1}^n |a_k| = C (b-a)^\alpha \frac{\sqrt{n}}{p^\alpha} \left(\frac{1}{n}\sum_{k=1}^n |a_k|\right).
\end{eqnarray*}
So if $p=p_n=n^{\frac{1}{2\alpha}+\epsilon}$, for some fixed $\epsilon >0$, then
$\mathbb E\left( \sup_{t\in[a,b]} |X_n'(t)-X'(x_t)|\right)\to 0.$\par
\medskip
As a result, one is left to establish the corresponding tightness criterion for the process $(X'(x_t))_{t\in[a,b]}$, namely

\begin{equation*}
\lim_{\delta\to 0}\limsup_{n\to\infty}~~\mathbb{P}\left(\sup_{
\substack{
(t,s)\in[a,b]^2\\
|t-s|\le \delta}}
 \Big{|}X_n'(x_t)-X_n'(x_s)\Big{|}>\lambda \right)=0.
\end{equation*}

However, since $x_t$ takes values in the finite set $D_n:=\bigcup_{k=0}^p \{a+k \eta\}$, it simply remains to establish that

\begin{equation}\label{tightnesscriterion-disc}
\lim_{\delta\to 0}\limsup_{n\to\infty}~~\mathbb{P}\left(\sup_{
\substack{
(u,v)\in D_n^2\\
|u-v|\le \delta}}
 \Big{|}X_n'(u)-X_n'(v)\Big{|}> \lambda\right)=0.
\end{equation}

\newpage
\noindent
\underline{{\bf Step 2:} imposing a lower distance in the discrete maximum.}

\medskip

Fix $\delta>0$. Observe that for any $(u,v)\in D_n^2$ such that $|u-v|\le \delta$, there exist $w \in D_n$ such that $\delta/3 \le |u-w|\le \delta$ and $\delta/3 \le |v-w|\le \delta$. As a result, we simply have

$$\sup_{
\substack{
(u,v)\in D_n^2\\
|u-v|\le \delta
}}
 \Big{|}X_n'(u)-X_n'(v)\Big{|}\le 2 \sup_{
\substack{
(u,v)\in D_n^2\\
\delta/3 \le|u-v|\le \delta
}}
\Big{|}X_n'(u)-X_n'(v)\Big{|},$$
and one is finally left to show that for any $\lambda>0$ we have
\begin{equation}\label{tightnesscriterion-disc-bis}
\lim_{\delta\to 0}\limsup_{n\to\infty}~~\mathbb{P}\left(\sup_{
\substack{
(u,v)\in D_n^2\\
\delta/3 \le |u-v|\le \delta
}}
 \Big{|}X_n'(u)-X_n'(v)\Big{|}> \lambda\right)=0.
\end{equation}

\medskip
\noindent
\underline{{\bf Step 3:} using a high dimensional Berry-Essen bound.}
\medskip

The next step consists in showing that the case of general random coefficients reduces to the Gaussian case. For this, we will use an appropriate, high dimensional, Berry--Esseen type argument, as recently developed in by V. Chernozhukov, D. Chetverikov and K. Kato in a nice series of papers. Namely, we will use Proposition 2.1 from \cite{chernozhukov2017central}, in a slightly  weaker form. Such a result indeed allows to deal with uniform estimates for sums of independent variables, in particular with estimates like \eqref{tightnesscriterion-disc-bis}, as soon as the supremum is taken on set that grows polynomially with $n$.
In order to remain self-contained, we restate the statement here, modifying a little bit their notations to avoid confusion with ours. Let consider $Y_1,\ldots,Y_n$  some independent random vectors of $\mathbb R^p$ and for each $i\in\{1,\cdots,n\}$ we denote by $(Y_{i,j})_{1\le j \le p}$ their coordinates. 

\begin{thm}[Proposition 2.1 of \cite{chernozhukov2017central}] \label{theo.chernouille}
Assume that the following conditions are satisfied:

\begin{itemize}
\item[(A1)] for all $(i,j)\in\{1,\cdots,n\}\times \{1,\cdots,p\}$,  $Y_{i,j}\in L^4(\mathbb{P})$,
\medskip
\item[(A2)] there exists $b>0$ such that
\begin{equation*}
\forall n\ge1,\,\,\forall j\in \{1,\cdots,p\},\,\,\frac{1}{n}\sum_{i=1}^n \mathbb E\left(Y_{i,j}^2\right)\ge b,
\end{equation*}

\medskip
\item[(A3)] there exists a constant $C>0$ such that
\begin{equation*}
\forall n\ge1,\,\,\forall j\in \{1,\cdots,p\},\,\,\frac{1}{n}\sum_{i=1}^n \mathbb E\left(Y_{i,j}^4\right)\le C,
\end{equation*}
\medskip
\item[(A4)]
\begin{equation*}
\forall n\ge1,\,\,\forall i \in \{1,\cdots,n\},\,\mathbb E\left(\max_{1\le j \le p} \left|\frac{Y_{i,j}}{C}\right|^4\right)\le 2.
\end{equation*}
\end{itemize}
Then, introducing $Z_1,\ldots,Z_n$ a Gaussian vector with same covariance matrix as $Y_1,\ldots,Y_n$, setting $\Pi_p:=]-\infty,t_1]\times\cdots\times]-\infty,t_p]$, it holds that

\begin{eqnarray}\label{Berry-Essen-bound}
&\displaystyle{\sup_{(t_1,t_2,\cdots,t_p) \in \mathbb R^p}}\left|\mathbb{P}\left(\frac{{S_n^{Y}}}{\sqrt{n}}\in \,\Pi_p \right)-\mathbb{P}\left(\frac{{S_n^{Z}}}{\sqrt{n}}\in\,\, \Pi_p \right)\right|\le K\left( \frac{\log(p n)^{\frac{7}{6}}+\log(pn)}{n^{\frac{1}{6}}}\right),
\end{eqnarray}
where
\[
\frac{{S_n^{Y}}}{\sqrt{n}}:=\frac{1}{\sqrt{n}}\sum_{k=1}^n Y_k, \qquad
\frac{{S_n^{Z}}}{\sqrt{n}}:=\frac{1}{\sqrt{n}}\sum_{k=1}^n Z_k,
\]
and where the constant $K$ only depends on the parameters $C,b$ appearing in the assumptions. 
\end{thm}

To make more transparent the connection with Proposition 2.1 in \cite{chernozhukov2017central}, note that we have taken here $B_n$ to be constant and $q=4$. Hence, (A2) represents (M.1) from \cite{chernozhukov2017central} and (A3), (A4) represent (M.2) and (E.2) with $B_n=C$ and $q=4$.

In order to apply the last Berry--Esseen type estimate in our context, we make the following identification: $\forall (u,v) \in D_n^2\,\,\text{with}\,\,\delta/3\le |u-v|\le \delta$, we set
\[
Y_{i,(u,v)}:= a_i \left( f'\left(\frac{i}{n}(p_n+u)\right)-f'\left( \frac{i}{n}(p_n+v)\right)\right).
\]
Let us check that the above vectors meet all requirements for applying bound (\ref{Berry-Essen-bound}). We first observe that independence is straightforward as the inputs $(a_i)_{i\ge 1}$ are independent random variables. Besides, assumptions (A1)-(A3)-(A4) are together ensured by the fact that $a_1 \in L^4(\mathbb{P})$ and that $f'$ is continuous and $2\pi$-periodic, hence bounded. In order to establish (A2) we shall rely on the content of Section \ref{sec.marginal}. Namely, by Proposition \ref{pro.conv.marginal}, we know that
\begin{equation}\label{unif-conv}
\sup_{(t,s)\in[a,b]^2} \Big{|}\mathbb E\left( |X_n'(t)-X_n'(s)|^2\right)-\mathbb E\left( |X_\infty'(t)-X_\infty'(s)|^2\right)\Big{|}\xrightarrow[n\to\infty]~0,
\end{equation}
where $X_\infty'$ is a Gaussian process with covariance function $-\rho''$. 
Moreover, by Lemma \ref{lem.minorcovar}, we know that
\[
\min_{\delta/3\le |t-s|\le \delta}\mathbb E\left( |X_\infty'(t)-X_\infty'(s)|^2\right)\ge b_\delta>0.
\]
As result, for $n$ large enough, assumption (A2) is fulfilled for some suitable constant $b_\delta$ which only depends on $\delta$. Then, we may use the estimate (\ref{Berry-Essen-bound}). Since $\eta$ has been chosen of polynomial growth in $n$, it is clear that $\text{Card}(D_n^2)$ has also a polynomial size in $n$, hence the bound in (\ref{Berry-Essen-bound}) tends to zero as $n$ tends to infinity. Let us introduce $\widetilde{X}_n'^{}(\cdot)$ the process analogous to $X_n'(\cdot)$ where the random coefficients are simply replaced by independent standard random Gaussian variables. We can deduce that

\[
\mathbb{P}\left(\sup_{
\substack{
(u,v)\in D_n^2\\
\delta/3 \le |u-v|\le \delta}}
 \Big{|}X_n'(u)-X_n'(v)\Big{|}\ge \lambda\right)
-\mathbb{P}\left(\sup_{
\substack{
(u,v)\in D_n^2\\
\delta/3 \le |u-v|\le \delta}}
 \Big{|}\widetilde{X}'_n(u)-\widetilde{X}'_n(v)\Big{|}\ge \lambda\right)
\xrightarrow[n\to\infty]~0.
\]
As a conclusion, to establish the estimate \eqref{tightnesscriterion-disc-bis},  it is sufficient to treat the case where the random coefficients are i.i.d. standard Gaussian variables. 

\medskip
\noindent
\underline{{\bf Step 4:} the Gaussian case.}\par
\medskip
But in the case where the coefficients are Gaussian, we are back to the case treated in the proof of Proposition \ref{prop.smooth}. Indeed, since $f'$ is $C^{\alpha}$, we can use the Gaussian hypercontractivity to deduce that for $\beta>0$ large enough such that $\alpha \beta >1$, and for some constant $C>0$, we have 
\[
\mathbb E\left[| \widetilde{X}'_n(t)-\widetilde{X}'_n(s)|^{\beta}\right] \leq  C  | t-s|^{\alpha \beta}.
\]
In particular, by the standard Lamperti type criterion, the laws of $(\widetilde{X}'_n(t))$ are tight for the uniform topology. This ensures that for all $\lambda>0$, we have 
\[
\mathbb{P}\left(\sup_{
\substack{
(u,v)\in D_n^2\\
\delta/3 \le |u-v|\le \delta}}
 \Big{|}\widetilde{X}'_n(u)-\widetilde{X}'_n(v)\Big{|}\ge \lambda\right) \leq \mathbb{P}\left(\sup_{
 |u-v|\le \delta}
 \Big{|}\widetilde{X}'_n(u)-\widetilde{X}'_n(v)\Big{|}\ge \lambda\right)
\xrightarrow[n\to\infty]~0,
\]
hence the result.
\end{proof}

\subsection{Tighness in the piecewise linear case}\label{sec.tightaffine}
In this section, we establish that if the periodic function $f$ is continuous and piecewise linear, then the family of distributions of the associated sequence $(X_n'(t))_{t \in [a,b]}$ of random processes is tight for the standard Skorokhod topology on $D([a,b])$. Since we already know that the limit process $(X'_{\infty})_{t \in [a,b]}$ is continuous by Lemma \ref{lem.conti}, this implies in particular that the sequence $(X_n'(t))_{t \in [a,b]}$ is in fact tight for the uniform topology on $D([a,b])$, see \cite{skoro}. Hence the family of distributions of the sequence $(X_n(t))_{t \in [a,b]}$ is tight for the $C^1$ topology.
\par
\medskip
By Theorem 13.2 p. 139 of \cite{bilou}, the Skorokhod tightness criterion reduces here to the following control of the modulus of continuity, for all $\lambda>0$
\begin{equation}\label{tight-skoroo}
\lim_{\delta\to 0}\limsup_{n\to\infty} \mathbb{P}\left(\inf_{t=(t_i)} \max_{i}\sup_{t,s\in[t_{i-1},t_i[ }|X_n'(t)-X_n'(s)|\ge \lambda\right)=0.
\end{equation} 
where the infimum is taken on discretizations $t=(t_j)$ of $[a,b]$ such that $|t_{i}-t_{i -1}|\leq \delta$.
\begin{prop}\label{pro.skoroaff}
Suppose that condition $iv)$ of Theorem \ref{theo.C2} holds, i.e. that the periodic function $f'$ is piecewise constant and $\mathbb E[|a_1|^4]<+\infty$, then for all $\lambda>0$, we have
\begin{equation}\label{tight-skoro}
\lim_{\delta\to 0}\limsup_{n\to\infty} \mathbb{P}\left(\max_{0\leq i \leq \lfloor\frac{b-a}{\delta}\rfloor}\sup_{t,s\in[a+i \delta,a+ (i+1)\delta[}|X_n'(t)-X_n'(s)|\ge \lambda\right)=0.
\end{equation}
In particular, the sequence $(X_n'(t))_{t \in [a,b]}$ of random processes is tight for the standard Skorokhod topology on $D([a,b])$, and since the limit process $(X'_{\infty})_{t \in [a,b]}$, it is thus tight for the uniform topology.
\end{prop}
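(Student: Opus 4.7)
The plan is to mimic the four-step strategy developed in the proof of Proposition \ref{Thm/C1/moment4}, the crucial new difficulty being that $f'$ is now genuinely discontinuous, so that the $C^\alpha$-type bound on $|f'(x)-f'(y)|$ used there becomes unavailable. The Diophantine assumption hidden in Condition $(\star)$ will act as a substitute: it provides quantitative equidistribution for the samples $k(p_n+t)/n \bmod 2\pi$, ensuring that the finitely many jump points of $f'$ are crossed in a well-controlled way.

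\textbf{Discretization (Step 1).} I would first introduce an equi-spaced grid $D_n \subset [a,b]$ of mesh $\eta = n^{-c}$ for a sufficiently large exponent $c>0$, and denote by $x_t \in D_n$ the nearest grid point below $t$. The key analytical step is to establish that $\mathbb E\bigl[\sup_{t \in [a,b]} |X_n'(t)-X_n'(x_t)|\bigr] \to 0$. Since $f'$ is piecewise constant with finitely many jumps per period, $X_n'$ is a c\`adl\`ag step function, and for each $k$ the contribution to $X_n'(t)-X_n'(x_t)$ vanishes unless the interval $k\,[x_t,t]/n$ crosses, modulo $2\pi$, one of the jump points of $f'$. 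Counting such indices $k$ via the polynomial convergence rate \eqref{conv-dioph} and the Diophantine lower bound \eqref{Def-Dioph} (both provided by Condition $(\star)$), and using the fourth moment of $a_1$ to control the resulting random sum, the maximal approximation error can be bounded by a negative power of $n$ provided $c$ is large enough. After this step, an auxiliary-point argument identical to Step 2 of the proof of Proposition \ref{Thm/C1/moment4} reduces matters to controlling the maximum of $|X_n'(u)-X_n'(v)|$ over pairs $(u,v)\in D_n^2$ satisfying $\delta/3\le |u-v|\le \delta$.

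\textbf{Reduction to the Gaussian case (Step 3).} I would then apply the high-dimensional Berry-Esseen bound of Theorem \ref{theo.chernouille} to the random vectors
\[
Y_{i,(u,v)} := a_i \bigl(f'(i(p_n+u)/n)-f'(i(p_n+v)/n)\bigr), \qquad (u,v) \in D_n^2,\ \delta/3 \le |u-v|\le \delta.
\]
Conditions (A1), (A3), (A4) are immediate from $\|f'\|_\infty < \infty$ and $\mathbb E[a_1^4]<\infty$, while (A2) follows from Lemma \ref{lem.minorcovar} combined with the uniform convergence \eqref{eq.con.uni} of covariances; the polynomial rate \eqref{eq.con.poly} guaranteed by $(\star)$ ensures that the Berry-Esseen bound tends to zero despite the polynomial-in-$n$ cardinality of the index set $D_n^2$. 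This reduces matters to the Gaussian analogue $\widetilde{X}_n'$, obtained by replacing the $(a_i)$'s by i.i.d. standard Gaussians.

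\textbf{The Gaussian case (Step 4) and main obstacle.} In the Gaussian case, since $f'$ is bounded the convolution $f'\ast\check{f'}$ is Lipschitz at the origin, so Lemma \ref{lem.minorcovar} yields $\mathbb E[|X'_\infty(t)-X'_\infty(s)|^2] \le C|t-s|$, and combined with \eqref{eq.con.uni} one obtains the uniform bound $d_n(u,v) := \mathbb E[|\widetilde{X}_n'(u)-\widetilde{X}_n'(v)|^2]^{1/2} \le C\sqrt{|u-v|}$. Hence $(D_n, d_n)$ has $\epsilon$-covering numbers of order $O(\epsilon^{-2})$ and Dudley's entropy bound yields, uniformly in $n$, a modulus of continuity estimate that vanishes as $\delta\to 0$; Markov's inequality then closes the argument. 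The main obstacle is unambiguously Step 1: establishing the vanishing of the discretization error in the absence of pointwise H\"older control on $f'$, which is where the Diophantine input of Condition $(\star)$ plays an essential role by governing how often the scaled samples $k(p_n+\cdot)/n$ cross the discontinuities of $f'$ inside a small subinterval.
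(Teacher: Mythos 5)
Your overall architecture (discretize, impose a lower separation, Gaussianize via Theorem \ref{theo.chernouille}, then treat the Gaussian case) matches the paper's, but your Step 4 contains a genuine gap. You claim that Lemma \ref{lem.minorcovar} plus the uniform convergence \eqref{eq.con.uni} yield $d_n(u,v):=\mathbb E[|\widetilde X_n'(u)-\widetilde X_n'(v)|^2]^{1/2}\le C\sqrt{|u-v|}$ \emph{uniformly in $n$}. This is false: if $u,v$ straddle a single discontinuity of $X_n'$ (which exists at scale $O(1/n)$ since $f'$ jumps), then $\widetilde X_n'(u)-\widetilde X_n'(v)=\widetilde a_k\,\frac{k}{n}\,\Delta$ for one index $k$ and one jump size $\Delta$, so $d_n(u,v)^2\asymp k^2\Delta^2/n^3$, which does not vanish as $|u-v|\to0$ for fixed $n$. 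What \eqref{eq.con.uni} actually gives is $d_n(u,v)^2\le C|u-v|+\varepsilon_n$ with $\varepsilon_n\to0$ only qualitatively, and the resulting entropy integral picks up a term of order $\sqrt{\varepsilon_n\log\mathrm{Card}(D_n)}\asymp\sqrt{\varepsilon_n\log n}$, which is not controlled without a \emph{rate} on $\varepsilon_n$. So Dudley's bound as you invoke it does not close; a repaired version would need the polynomial rate \eqref{eq.con.poly} (hence condition $(\star)$) and a two-regime covering-number estimate, at which point you are essentially re-proving what the paper gets directly from the Gaussian comparison inequality (Theorem \ref{theo.max}): it compares $\max_{D_n^\delta}|\widetilde X_n'(t)-\widetilde X_n'(s)|$ with $\max_{D_n^\delta}|X_\infty'(t)-X_\infty'(s)|$ using $\Delta=O(n^{-\kappa})$ against a polynomially large index set, and then lets the continuity of $X_\infty'$ handle the $\delta\to0$ limit.

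Your Step 1 also takes a harder road than necessary, and is only sketched at the point where it is delicate. Since $f'$ is piecewise constant, $X_n'$ is itself a step function, so the supremum over each $\delta$-subinterval is \emph{exactly} a maximum over the finite set of discontinuity points of $X_n'$ in that subinterval (plus endpoints); the paper shows by a pigeonhole argument that this set has cardinality $O(n/\delta)$, which is all that is needed for the Berry--Esseen step. There is then no discretization error to control at all. Your alternative — a uniform grid of mesh $n^{-c}$ and a bound on $\mathbb E[\sup_t|X_n'(t)-X_n'(x_t)|]$ via counting the indices $k$ whose sample interval crosses a jump — can in principle be made to work, but it requires a quantitative discrepancy estimate for the orbit $k(p_n+t)/n \bmod 2\pi$ uniformly in $t$ (the naive count $n\eta r$ is dominated by the discrepancy term for small $\eta$), and you have not supplied it. I would strongly recommend replacing both steps by the paper's ``supremum is a maximum'' observation and the Gaussian comparison argument.
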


\begin{proof}
Let us first remark that in the statement of Proposition \ref{pro.skoroaff}, we choose the regular discretization where $t_i:=a + i \delta$ for $0 \leq i \leq \lfloor (b-a) /\delta\rfloor$, so that the estimate \eqref{tight-skoro} indeed implies the tightness criterion \eqref{tight-skoroo}.
The proof of Proposition \ref{pro.skoroaff} follows globally the same scheme as the one of Proposition \ref{Thm/C1/moment4}, but the last step i.e. the Gaussian case, requires some more work.  Let us first introduce few notations. By saying that $f$ is piecewise linear we mean that we can find $0=s_0<s_1<\cdots<s_{r+1}=1$ and $(\alpha_j,\beta_j)_{0\le j \le r}$ such that for all $x\in[0,1[$
\[
f(x)=\sum_{j=0}^r \textbf{1}_{[s_j,s_{j+1}[}(x) \left(\alpha_j x+\beta_j\right).
\]
In that case, we have naturally 
\[
X'_{n}(t)=\frac{1}{\sqrt{n}}\sum_{j=1}^r \alpha_j \sum_{k=1}^n a_k \frac{k}{n}   \mathds{1}_{[s_j,s_{j+1}[}\left( \frac{k}{n}\left(p_n+t\right)\right).
\]
\underline{\textbf{Step 1 :} the supremum is a maximum.}
\par
\medskip
First note that since the function $X_{n}'$ is piecewise constant, the supremum in Equation \eqref{tight-skoro} is a maximum on a finite set, namely we have
\[
\max_{0\leq i \leq \lfloor\frac{(b-a)}{\delta}\rfloor}\sup_{t,s\in[t_i, t_{i+1}[}|X_n'(t)-X_n'(s)| =\max_{t,s \in E_n^{\delta}}|X_n'(t)-X_n'(s)|,
\]
where the set $E_n^{\delta}$ is union of the set of points of discontinuity of the function $X_n'$ and the discretization points. Namely, the set $E_n^{\delta}$ is given by
\[
E_n^{\delta}:=\bigcup_{j=1}^r \bigcup_{i=0}^{\lfloor\frac{b-a}{\delta}\rfloor} \left( E_{n,i,j}^{\delta}\cup \{t_i, t_{i+1}\}\right),
\]
 with
\[
E^{\delta}_{n,i,j}:=\left\{t\in[t_i, t_{i+1}]\,\Big{|}\,\exists (k,q) \in \{1,\cdots,n\}\times \mathbb{Z},\,\frac{k}{n}\left(p_n+t\right)-2\pi q\in\{s_j,s_{j+1}\}\right\}.
\]

Note that we have then 
\begin{equation}\label{numbersingu}
\text{Card}\left( E^{\delta}_{n,i,j} \right) \le 2 n, \quad \text{and thus} \quad  \text{Card}\left( E^{\delta}_{n} \right) \leq 2 (n+1) r \left( \lfloor\frac{b-a}{\delta}\rfloor +1 \right).
\end{equation}
Indeed, let us argue by contradiction and suppose that $\text{Card}( E^{\delta}_{n,i,j}) \ge 1+2 n$. Then, the pigeonhole principle would ensure that there exists two disctincs elements $t,t'$ in $E^{\delta}_{n,i,j}$, two integers $(q,q') \in \mathbb{Z}$ and some integer $k\in\{1,\cdots,n\}$ such that
\[
\frac{k}{n}\left(p_n+t\right)-2\pi q= \frac{k}{n}\left(p_n+t'\right)-2\pi q'.
\]
Necessarily, we would have then $2\pi |q-q'|=\frac{k}{n}\left|t-t'\right|\le |t-t'|< \delta$, hence for $\delta$ small enough $q=q'$ and $t=t'$ which leads to a contradiction.

\par
\medskip
\noindent
\underline{\textbf{Step 2 :} lower bound on the distance between singularities.}
\par
\medskip 
As in the proof of Proposition \ref{Thm/C1/moment4}, using the triangle inequality, without loss of generality, we can moreover impose a lower bound on the distance between ``singularities''. Namely, to establish the estimate \eqref{tight-skoro}, it is sufficient to prove that 
\begin{equation}\label{Maxi-reduc2}
\lim_{\delta\to 0}\limsup_{n\to\infty}\mathbb{P}\left( \max_{\substack{t,s \in E^{\delta}_n \\ |t-s|\ge \frac{\delta}{3}}}|X_{n}'(t)-X_{n}'(s)|>\lambda\right)=0.
\end{equation}

\medskip
\noindent
\underline{{\bf Step 3:} using a high dimensional Berry-Essen bound.} \par
\medskip
As in the proof of Proposition \ref{Thm/C1/moment4}, we can now reduce the general case to the case of Gaussian coefficients thanks to Theorem \ref{theo.chernouille} recalled above.  
For convenience, we set this time
\[
D_n^{\delta} := \{ (t,s)\in E^{\delta}_n \times E^{\delta}_n , \; |t-s|\geq \delta/3\},
\]
and
\[
Y_i:=\left(Y_i(s,t)\right)_{(t,s) \in D_n^{\delta}}:=\left(a_i \times \frac{i}{n} \left(f'\left(\frac{i}{n}(p_n+t)\right)-f'\left(\frac{i}{n}(p_n+s)\right)\right)\right)_{(t,s) \in D_n^{\delta}}.
\]
Exactly as in the third step of the proof of Proposition \ref{Thm/C1/moment4}, one can check the vectors $Y_i$ satisfy the conditions (A1)--(A4) of Theorem \ref{theo.chernouille}. By the estimate \eqref{numbersingu}, we have moreover 
\[
\text{Card}(D_n^{\delta})\le 4 (n+1)^2 r^2 \left( \lfloor\frac{b-a}{\delta}\rfloor +1 \right)^2.
\]
Therefore, applying the bound \eqref{Berry-Essen-bound}, we get that
\[
\limsup_{n\to\infty}\left| \mathbb{P}\left( \max_{(t,s) \in D_n^{\delta}} |X_{n}'(t)-X_{n}'(s)|>\lambda\right)- \mathbb{P}\left(\max_{(t,s) \in D_n^{\delta}}|\widetilde{X}_{n}'(t)-\widetilde{X}_{n}'(s)|>\lambda\right)\right|= 0.
\]
Thus, we are left to show that
\begin{equation}\label{Maxi-reduc3bis}
\lim_{\delta\to 0}\limsup_{n\to\infty}\mathbb{P}\left( \max_{(t,s) \in D_n^{\delta}}|\widetilde{X}_{n}'(t)-\widetilde{X}_{n}'(s)|>\lambda\right)=0.
\end{equation}
\par
\medskip
\noindent
\underline{\textbf{Step 4 :} Maxima of high-dimensional Gaussian vectors.}\par
\medskip
\noindent
We shall use here Theorem 2 of \cite{chernouille2}, which we recall below.

\begin{thm} \label{theo.max}
Let $U=(U_j)_{1 \leq j \leq p}$ and $V=(V_j)_{1 \leq j \leq p}$ be two Gaussian vectors in $\mathbb R^p$ with covariances $\Sigma^U$ and $\Sigma^V$, then
\[
\sup_{x \in \mathbb R} \left|\mathbb P\left( \max_{1\leq j \leq p} U_j \leq x \right)-\mathbb P\left( \max_{1\leq j \leq p} V_j \leq x \right)\right| \leq C \times \Delta^{1/3} \left( 1 \vee \log\left( p/\Delta \right)\right)^{2/3}, 
\]
where $C$ is an universal constant that depends only on $\min_{1\leq j \leq p} \Sigma_{jj}^V$ and $\max_{1\leq j \leq p} \Sigma_{jj}^V$, and where 
\[
\Delta := \max_{1\leq i,j\leq p} \left|\Sigma_{ij}^V- \Sigma_{ij}^U\right|.
\]
\end{thm}
\noindent
Let us apply this result to the Gaussian vectors 
\[
U=\left(\widetilde{X}_{n}'(t)-\widetilde{X}_{n}'(s)\right)_{(t,s) \in D_n^{\delta}} \quad \text{and} \quad  V=\left({X}_{\infty}'(t)-{X}_{\infty}'(s)\right)_{(t,s) \in D_n^{\delta}}.
\]
In this case, we have $p=p_{n,\delta}=\text{Card}(D_n^{\delta})$ which grows polynomially in $n$ and thanks to the estimate \eqref{eq.con.poly}, under the condition $(\star)$,  there exists $\kappa >0$ such that 
\[
\begin{array}{lr}
\Delta & \displaystyle{=\max_{(t,s),(t',s') \in D_n^{\delta}} \left| \mathbb E \left[ \left(\widetilde{X}_{n}'(t)-\widetilde{X}_{n}'(s)\right)\left(\widetilde{X}_{n}'(t')-\widetilde{X}_{n}'(s')\right) \right] \right.} \\
\\
& \displaystyle{\left. - \mathbb E \left[ \left({X}_{\infty}'(t)-{X}_{\infty}'(s)\right)\left( X_{\infty}'(t')-{X}_{\infty}'(s')\right) \right] \right| = O\left(  \frac{1}{n^{\kappa}}\right)}.
\end{array}
\]
Moreover, by stationarity of the limit process, we have 
\[
 \mathbb E \left[ \left({X}_{\infty}'(t)-{X}_{\infty}'(s)\right)^2\right] =  2  \left| \rho''(0)-\rho''(t-s)\right|.
\]
By Lemma \ref{lem.minorcovar}, we know that there exists a positive constant $c_{\delta}>0$ such that 
\[
b_{\delta} \leq \min_{(t,s) \in D_n^{\delta} }  \mathbb E \left[ \left({X}_{\infty}'(t)-{X}_{\infty}'(s)\right)^2\right]. 
\]
Besides, by continuity of the correlation function at zero, there exists $C_{\delta}>0$ such that  
\[
\max_{(t,s) \in D_n^{\delta} }  \mathbb E \left[ \left({X}_{\infty}'(t)-{X}_{\infty}'(s)\right)^2\right] \leq C_{\delta}.
\]
Therefore, we can apply Theorem \ref{theo.max} to deduce that  
\[
\limsup_{n\to\infty}\left| \mathbb{P}\left( \max_{(t,s) \in D_n^{\delta}}|X_{\infty}'(t)-X_{\infty}'(s)|>\lambda\right)- \mathbb{P}\left( \max_{(t,s) \in D_n^{\delta}}|\widetilde{X}_{n}'(t)-\widetilde{X}_{n}'(s)|>\lambda\right)\right|= 0.
\]
To conclude, note that 
\[
\mathbb{P}\left( \max_{(t,s) \in D_n^{\delta}}|X_{\infty}'(t)-X_{\infty}'(s)|>\lambda\right) \leq \mathbb{P}\left( \sup_{\substack{t,s \in [a,b] \\ |t-s|\leq \delta}} |X_{\infty}'(t)-X_{\infty}'(s)|>\lambda\right)
\]
and since the limit process $X_{\infty}'$ is continuous, the standard criterion for tightness in the uniform topology ensures that the last upper bound goes to zero as $\delta$ goes to zero, hence the result.

\end{proof}


\section{Convergence in distribution of the number of zeros}\label{Cvzeros}
In this Section, we detail how the convergence results in the $C^1$ topology obtained above imply the convergence in distribution of the number of zeros of the underlying random functions. We first state and prove the following slight variation of Proposition 1 of \cite{rusakov}.
If $f$ is a continuous function on $[a,b]$, we set 
\[
\mathcal N(f,[a,b]):=\text{Card} \{ t \in [a,b], \; f(t)=0\} \in \mathbb{N}\cup\{\infty\}
\]
\begin{prop}
Let $(f_n)_{n \geq 1}$ be a sequence of continuous functions that are piecewise $C^1$ on $[a,b]$ and which converge to a $C^1$ function $f_{\infty}$ for the $C^1([a,b])$ topology :
\[
\lim_{n \to +\infty} \left( \sup_{x \in [a,b]} |f_n(x)-f_{\infty}(x)| + \sup_{x \in [a,b]} |f_n'(x)-f_{\infty}'(x)| \right) =0.
\]
Suppose moreover that $f_{\infty}$ is non-degenerate in the following sense: $f_{\infty}(a) f_{\infty}(b)\neq 0$ and $\omega(f_{\infty}):=\inf_{x \in [a,b]} |f_{\infty}(x)| + |f_{\infty}'(x)| >0$. Then, we have $\mathcal N(f_{\infty},[a,b])<+\infty$ and  
\[
\lim_{n \to +\infty} \mathcal N(f_n,[a,b]) =N(f_{\infty},[a,b]).
\]
\end{prop}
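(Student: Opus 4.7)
The plan is to show first that $f_\infty$ has only finitely many zeros, then pinpoint these zeros using the non-degeneracy, and finally transfer the count to $f_n$ via the $C^1$ convergence.

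To start, I would use the non-degeneracy condition $\omega(f_\infty)>0$ to observe that at any zero $t$ of $f_\infty$ we must have $|f_\infty'(t)|\geq \omega(f_\infty)>0$. Hence every zero is transverse, therefore isolated, and compactness of $[a,b]$ ensures $\mathcal N(f_\infty,[a,b])=:k<+\infty$. Enumerate the zeros as $t_1<\cdots<t_k$; they all lie in the open interval $(a,b)$ since $f_\infty(a)f_\infty(b)\neq 0$. By continuity of $f_\infty'$ I would then pick $\varepsilon>0$ small enough so that the intervals $I_i:=[t_i-\varepsilon,t_i+\varepsilon]$ are pairwise disjoint, contained in $(a,b)$, and satisfy $|f_\infty'(x)|\geq \omega(f_\infty)/2$ on each $I_i$. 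On the complement $K:=[a,b]\setminus \bigcup_i (t_i-\varepsilon,t_i+\varepsilon)$, the continuous function $f_\infty$ has no zero, so $\delta:=\inf_K|f_\infty|>0$.

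Next, I would invoke the $C^1$ convergence: for $n$ large enough, $\|f_n-f_\infty\|_\infty<\delta/2$ and $\|f_n'-f_\infty'\|_\infty<\omega(f_\infty)/4$, the second supremum being taken where $f_n'$ is defined. On $K$ this forces $|f_n|\geq \delta/2>0$, so $f_n$ has no zero in $K$. On each $I_i$, the derivative $f_n'$ is defined almost everywhere, has constant sign (matching the sign of $f_\infty'(t_i)$) and satisfies $|f_n'|\geq \omega(f_\infty)/4$ almost everywhere. Since $f_n$ is continuous and piecewise $C^1$, it is absolutely continuous on $I_i$, hence strictly monotone on $I_i$. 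Meanwhile $f_n(t_i\pm\varepsilon)$ is within $\delta/2$ of $f_\infty(t_i\pm\varepsilon)$, which have opposite signs with $|f_\infty(t_i\pm\varepsilon)|\geq \delta$; the sign change thus persists and the intermediate value theorem yields exactly one zero of $f_n$ in $I_i$. Summing over $i$ gives $\mathcal N(f_n,[a,b])=k=\mathcal N(f_\infty,[a,b])$ for all $n$ large enough, which is stronger than the claimed convergence.

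The only delicate point is that $f_n$ is merely piecewise $C^1$, so $f_n'$ need not be everywhere defined. This is however not a real obstacle: absolute continuity of $f_n$ lets me integrate the a.e. bound $|f_n'|\geq \omega(f_\infty)/4$ on any subinterval of $I_i$ to obtain strict monotonicity, and the sup norm comparison between $f_n'$ and the continuous function $f_\infty'$ away from the jump set of $f_n'$ is precisely what the assumed $C^1$ topology provides. All other ingredients (IVT, compactness, continuity of $f_\infty'$) are standard.
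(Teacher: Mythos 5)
Your proof is correct and follows essentially the same route as the paper's: isolate the finitely many (transverse) zeros of $f_{\infty}$ in disjoint neighbourhoods where the derivative bound forces monotonicity, check a sign change at the endpoints, and bound $|f_n|$ away from zero on the complement; the paper merely uses the sublevel set $f_{\infty}^{-1}(]-\alpha,\alpha[)$ instead of $\varepsilon$-balls around the zeros. Your extra remark handling the a.e.-defined $f_n'$ via absolute continuity is a welcome refinement of a point the paper leaves implicit.
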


\begin{proof}
The condition $\omega(f_\infty)>0$ implies that there is no accumulation point of zeros of $f_\infty$ and that $\mathcal{N}(f_\infty,[a,b])<\infty$ follows. For any $\alpha<\min\left(\frac{\omega(f_\infty)}{2},|f(a)|,|f(b)|\right)$, the open set $f_\infty^{-1}\left(]-\alpha,\alpha[\right)$ is a finite union of open intervals containing exactly one zero of $f_\infty$. We may then write for $r=\mathcal{N}(f_\infty,[a,b])$ :
\[
f_\infty^{-1}\left(]-\alpha,\alpha[\right)=\bigcup_{i=1}^r ]x_i,y_i[,
\]
with $f_\infty(x_i)f_\infty(y_i)<0$. Besides, one each of these intervals, $|f_\infty'|>\frac{\omega(f_\infty)}{2}>0$ which implies that $f_\infty'>\frac{\omega(f_\infty)}{2}$  or else $f_\infty'<-\frac{\omega(f_\infty)}{2}$. Thus, for $n$ large enough, we may impose that
\begin{itemize}
\item[(i)] for all $i\in\{1,\cdots,r\}$, $f_n'>\frac{\omega(f_\infty)}{3}$ or $f_n'<-\frac{\omega(f_\infty)}{3}$ on $]x_i,y_i[$,

\medskip

\item[(ii)] for all $i\in\{1,\cdots,r\}$, $f_n(x_i)f_n(y_i)<0$,

\medskip

\item[(iii)] $|f_n|>0$ on $f_\infty^{-1}\left(]-\alpha,\alpha[\right)^{\text{c}}$.
\end{itemize}
Gathering properties (i), (ii) and (iii) and keeping in mind that $f_n$ is continuous implies that $\mathcal{N}(f_n,[a,b])=r=\mathcal{N}(f_\infty,[a,b])$ and proves the claim.
\end{proof}

\medskip

Relying on the continuous mapping Theorem on the metric space $E$ of continuous and piecewise $C^1$ functions on $[a,b]$ which is endowed with the canonical norm $\|f\|_E:=\sup_{x\in [a,b]}|f(x)|+|f'(x)|$, we have the following corollary.

\medskip

\begin{cor}
Let $X_n(\cdot)$ be a sequence of random processes taking values in the above metric space $E$ and converging in distribution towards $X_\infty(\cdot)$ which is assumed to be non-degenerate almost surely, we must have:

\[
\mathcal{N}\left(X_n(\cdot),[a,b]\right)\xrightarrow[n\to\infty]{\text{Law}}~\mathcal{N}\left(X_\infty(\cdot),[a,b]\right).
\]
\end{cor}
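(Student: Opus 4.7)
The plan is to deduce the statement as a one-line application of the continuous mapping theorem, the preceding proposition providing exactly the deterministic continuity input required. More precisely, I would introduce the zero-counting functional
\[
\Phi : (E, \|\cdot\|_E) \longrightarrow \mathbb{N}\cup\{\infty\}, \qquad \Phi(f) := \mathcal{N}(f,[a,b]),
\]
together with the set $\mathrm{ND} \subset E$ of non-degenerate functions, that is, those $f \in E$ such that $f(a)f(b) \neq 0$ and $\omega(f) := \inf_{x\in[a,b]}(|f(x)|+|f'(x)|) > 0$. The previous proposition then rephrases exactly as $\mathrm{ND} \subset \mathrm{Cont}(\Phi)$, with $\Phi$ moreover finite on $\mathrm{ND}$.

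Next, by assumption the limit process $X_\infty$ is almost surely non-degenerate, so $\mathbb{P}(X_\infty \in \mathrm{ND}) = 1$, and hence $\mathbb{P}(X_\infty \in \mathrm{Cont}(\Phi)) = 1$. Together with the assumed weak convergence $X_n \Rightarrow X_\infty$ in the Polish space $E$, the continuous mapping theorem in the form where $\Phi$ need only be continuous on a set of full mass for the limit directly yields
\[
\Phi(X_n) \xrightarrow[n\to\infty]{\mathrm{Law}} \Phi(X_\infty),
\]
which is precisely the claimed convergence.

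The only genuinely non-trivial step is the deterministic continuity assertion, which has already been established in the preceding proposition. For the corollary itself, the remaining work is minimal: one only needs to check that $\{X_\infty \in \mathrm{ND}\}$ is a measurable event, which is immediate since both the map $\omega(\cdot)$ and the boundary evaluations $f \mapsto f(a)$, $f \mapsto f(b)$ are continuous on $E$, and that $\Phi$ itself is Borel measurable on $E$, which follows from the fact that on $\mathrm{ND}$ the functional $\Phi$ is locally constant (and outside $\mathrm{ND}$ we can always redefine it measurably). Accordingly, no substantive obstacle beyond carefully invoking the appropriate version of the continuous mapping theorem is anticipated.
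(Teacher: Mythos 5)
Your proposal is correct and is exactly the argument the paper intends: the preceding proposition is the deterministic continuity statement, and the corollary follows from the version of the continuous mapping theorem requiring continuity only on a set of full measure for the law of $X_\infty$ (noting, as the paper implicitly does, that $X_\infty$ is almost surely $C^1$ and non-degenerate, so it lies in the continuity set of the zero-counting functional). The measurability remarks you add are harmless elaborations of what the paper leaves implicit.
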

\bigskip
This last corollary associated with Theorem \ref{theo.C2} and Lemma \ref{lem.nondeg} naturally implies Theorem \ref{theo.ConvZero} stated in the introduction. Following the same lines as in \cite{iksanov2016}, one can then deduce the convergence of the point measure associated to the zeros as stated in Theorem \ref{theo.ConvPointMeas}.

\appendix
\label{appendix1}
\section{Some technical results} \label{proof.prop.conv}
In this section, we give the proof of Proposition \ref{pro.conv.kernel} stated in Section  \ref{sec.kernel} on the convergence of Riemann / ergodic sums, both in qualitative and quantitative ways.  
Let us first recall the definition of the functions $K$ 
\[
K(x):=e^{i x/2} \sin_c(x/2) =  \int_0^1 e^{i u x}du,
\]
and introduce its Riemann sum approximation $K_n$
\begin{equation}\label{eq.defKn}
K_n(x):=  \frac{1}{n} \sum_{k=1}^n\exp\left( \frac{i k x}{n}  \right)=e^{i \left( \frac{n+1}{2n}x\right)}\frac{ \sin \left(\frac{x}{2}\right) }{n \times \sin\left(\frac{ x}{2n}  \right) }.
\end{equation}
First note that both $K$ and $K_n$, and all their derivatives are then uniformly bounded and the first two derivatives are given by 
\[
K'(x) =  i \int_0^1 u e^{i u x}du, \qquad K''(x) =  - \int_0^1 u^2 e^{i u x}du,
\]
and
\begin{equation}\label{eq.defKprime}
K_n'(x)=  \frac{i}{n} \sum_{k=1}^n\frac{k}{n} \exp\left( \frac{i k x}{n}  \right), \quad K_n''(x) = -\frac{1}{n} \sum_{k=1}^n\frac{k^2}{n^2} \exp\left( \frac{i k x}{n}  \right).
\end{equation}
Moreover, direct computations show that there exists positive constants $C,C' C''$ such that for all $x \notin \pi \mathbb Z$, we have 
\begin{equation}\label{eq.KnKprimen}
|K_n(x)| \leq  \frac{1}{n \times \left| \sin\left(\frac{ x}{2n}  \right)\right|}, \quad |K_n'(x)| \leq  \frac{C}{n \times \left|\sin\left(\frac{ x}{2n}  \right)\right|}+\frac{C'}{n^2 \times \sin^2\left(\frac{ x}{2n}  \right)},
\end{equation}
and similarly 
\begin{equation}\label{eq.Kseconden}
 \quad |K_n''(x)| \leq \frac{C}{n \times \left|\sin\left(\frac{ x}{2n}  \right)\right|}+ \frac{C'}{n^2 \times \sin^2\left(\frac{ x}{2n}  \right)}+\frac{C''}{n^3 \times \left|\sin^3\left(\frac{ x}{2n}  \right)\right|}.
\end{equation}

\subsection{Some preliminary lemmas} 
Let us first state the following lemma dealing with the uniform asymptotic behavior of $K_n$ and its derivatives on compact sets.
\begin{lma}\label{lem.convuni}
If $A$ is a compact set of the real line, then for $0 \leq i \leq 2$
\begin{equation}\label{eq.unifK}
\lim_{n \to +\infty} \sup_{x \in A} |K_n^{(i)}(x)-K^{(i)}(x)| =0.
\end{equation}
\end{lma}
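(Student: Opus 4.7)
The key observation is that each $K_n^{(j)}$, for $j \in \{0,1,2\}$, is nothing but a right-endpoint Riemann sum approximation of the integral representation of $K^{(j)}$. Indeed, differentiating under the integral sign, respectively under the sum, in the definitions recalled just above the lemma, one gets
\begin{equation*}
K^{(j)}(x) = i^j \int_0^1 u^j e^{iux}\,du, \qquad K_n^{(j)}(x) = i^j \cdot \frac{1}{n}\sum_{k=1}^n \left(\frac{k}{n}\right)^j e^{i(k/n)x},
\end{equation*}
so that $K_n^{(j)}(x)$ is precisely the Riemann sum of the function $\varphi_j(\cdot,x): u \mapsto u^j e^{iux}$ associated with the regular subdivision of $[0,1]$ of step $1/n$, with $x$ playing the role of a parameter.

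The plan is then simply to invoke the elementary $C^1$ error estimate for Riemann sums: for any $g \in C^1([0,1])$ one has
\begin{equation*}
\left|\int_0^1 g(u)\,du - \frac{1}{n}\sum_{k=1}^n g\!\left(\frac{k}{n}\right)\right| \leq \frac{\|g'\|_\infty}{n},
\end{equation*}
which follows by writing the difference as $\sum_{k=1}^n \int_{(k-1)/n}^{k/n}\bigl(g(u)-g(k/n)\bigr)\,du$ and applying the mean value inequality on each of the $n$ subintervals. Applied to $g = \varphi_j(\cdot,x)$, whose partial derivative in $u$ equals $j u^{j-1}e^{iux} + ix u^j e^{iux}$ and hence satisfies $|\partial_u \varphi_j(u,x)| \leq j + |x|$ uniformly for $u \in [0,1]$, this yields at once the pointwise bound
\begin{equation*}
\bigl|K_n^{(j)}(x) - K^{(j)}(x)\bigr| \leq \frac{j + |x|}{n}.
\end{equation*}

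Taking the supremum over the compact set $A$, on which $|x|$ is uniformly bounded, gives the announced uniform convergence, in fact at the quantitative rate $O(1/n)$. There is essentially no obstacle here beyond recognizing the Riemann sum structure: once it is identified, the proof boils down to a one-line application of the classical $C^1$ Riemann sum estimate combined with the fact that the partial derivative of the integrand in the integration variable is controlled uniformly in the parameter $x$ on compact sets. This quantitative version may also be of use later to produce the polynomial speed of convergence required under condition $(\star)$ in Proposition \ref{pro.conv.kernel}.
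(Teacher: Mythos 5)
Your proof is correct and follows the same route as the paper, which simply observes that $K_n^{(i)}$ is the Riemann sum of the integral defining $K^{(i)}$ and that such sums converge uniformly on compacts. Your quantitative bound $|K_n^{(j)}(x)-K^{(j)}(x)|\leq (j+|x|)/n$ is in fact exactly the estimate the paper itself derives later (with the sharper constant $1/(2n)$) in the proof of Lemma \ref{lem.convuni3}, so your anticipation of its later use is on target.
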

\begin{proof}
In Equations \eqref{eq.defKn} and \eqref{eq.defKprime} above, the functions $K_n$ and its derivatives appear as Riemann sums, which converge uniformly on compact sets, hence the result.
\end{proof}
Moreover, the function $K_n$ and its derivatives uniformly vanish at infinity in the following sense.
\begin{lma}\label{lem.convuni2}
Let $p_n$ be a sequence of real numbers such that the ratio $p_n/n$ converges to $\alpha \in (0,2\pi) \backslash \pi \mathbb Q$ and let $Q$ be a positive integer, then for $0 \leq i \leq 2$
\begin{eqnarray}
\label{eq.unifK}
\lim_{n \to +\infty} \sup_{s,t \in [a,b]} \sup_{\substack{p \neq -q \in \mathbb N \\ |p| \leq Q, |q| \leq Q}}
\left|K_n^{(i)}\left(  (p+q)p_n+ (pt+qs)\right)\right|=0.
\end{eqnarray}
\end{lma}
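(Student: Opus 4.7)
The plan is to apply the explicit upper bounds \eqref{eq.KnKprimen} and \eqref{eq.Kseconden} on $K_n$ and its derivatives, which reduce the problem to showing a uniform lower bound on $|\sin(x/(2n))|$ where $x = (p+q)p_n + (pt+qs)$. Indeed, all three bounds on $|K_n|, |K_n'|, |K_n''|$ consist of terms of the form $c/(n^j |\sin(x/(2n))|^j)$ for $j \in \{1,2,3\}$, so as soon as $|\sin(x/(2n))|$ is bounded below by a positive constant independent of $n$, $s,t \in [a,b]$ and the admissible pairs $(p,q)$, the supremum in the statement will decay like $O(1/n)$.

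The core of the proof is thus to show that $x/(2n)$ stays uniformly bounded away from $\pi\mathbb{Z}$ for $n$ large. Decomposing
\[
\frac{x}{2n} = (p+q)\frac{p_n}{2n} + \frac{pt+qs}{2n},
\]
the second term is $O(Q(|a| \vee |b|)/n)$ and thus tends to $0$ uniformly in $s,t \in [a,b]$ and $|p|,|q| \leq Q$. The first term converges to $(p+q)\alpha/2$, uniformly since there are only finitely many admissible pairs. Now the hypothesis $p \neq -q$ means $p+q \neq 0$, and the irrationality of $\alpha/\pi$ forces $(p+q)\alpha/2 \notin \pi \mathbb{Z}$: otherwise $\alpha/\pi$ would equal $2m/(p+q)$ for some $m \in \mathbb{Z}$. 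Since the set $\{(p,q) \in \mathbb{Z}^2 : |p|,|q| \leq Q, \, p+q \neq 0\}$ is finite, the quantity
\[
c_Q := \min_{\substack{|p|, |q| \leq Q \\ p+q \neq 0}} \operatorname{dist}\bigl((p+q)\alpha/2,\, \pi\mathbb{Z}\bigr) > 0.
\]

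Combining these two observations, for $n$ large enough one has $\operatorname{dist}(x/(2n), \pi\mathbb{Z}) \geq c_Q/2$ uniformly in $(s,t,p,q)$, which yields a lower bound $|\sin(x/(2n))| \geq \sin(c_Q/2) > 0$. Plugging this into \eqref{eq.KnKprimen} and \eqref{eq.Kseconden} gives the desired uniform convergence to $0$ for $K_n$, $K_n'$ and $K_n''$. The main (and really only) subtlety is the separation from $\pi\mathbb{Z}$, which is what makes the exclusion $p \neq -q$ and the irrationality of $\alpha/\pi$ essential.
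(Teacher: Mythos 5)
Your proof is correct and follows essentially the same route as the paper's: reduce via the bounds \eqref{eq.KnKprimen} and \eqref{eq.Kseconden} to a uniform lower bound on $\left|\sin\left(\frac{(p+q)p_n+(pt+qs)}{2n}\right)\right|$, then use $p_n/n\to\alpha$, the $O(1/n)$ smallness of $(pt+qs)/(2n)$, the irrationality of $\alpha/\pi$ together with $p+q\neq 0$, and the finiteness of the set of admissible $(p,q)$. Your explicit constant $c_Q$ just makes quantitative the paper's ``$\sin\left(\frac{(p+q)\alpha}{2}+o(1)\right)$'' step.
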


\begin{proof}
From Equations \eqref{eq.KnKprimen} and \eqref{eq.Kseconden} which give upper bounds on $K_n$ and its derivatives, it is sufficient to show that  $\sin((p+q)p_n/2n+ (pt+qs)/2n)$ is bounded away from zero. 
We have 
\[
\sin \left(\frac{ (p+q)p_n+ (pt+qs)}{2n}\right) = \sin \left(\frac{(p+q)\alpha}{2}+  \frac{(p+q)}{2} \left(\frac{p_n}{n}-\alpha  \right)+ \frac{(pt+qs)}{2n}\right)
\]
so that uniformly in $|p|,|q| \leq Q$, $p \neq -q$ and $s,t \in [a,b]$, 
\[
\sin \left(\frac{ (p+q)p_n+ (pt+qs)}{2n}\right)= \sin \left(\frac{(p+q)\alpha}{2} +o(1)\right).
\]
Since $\alpha \in (0,2\pi) \backslash \pi \mathbb Q$, we have uniformly in $|p|,|q| \leq Q$ with $p\neq -q$
\[
\left|\sin \left(\frac{(p+q)\alpha}{2}\right)\right|>0,
\]
hence the result. 
\end{proof}
Moreover,  if $\alpha/\pi$ is a Diophantine number, the last estimate can be quantified in the following way.
\begin{lma}\label{lem.convuni3}Suppose that $(\alpha,p_n)$ satisfies condition $(\star)$, in other words suppose that
there exists $c_\alpha>0$ and $\nu_\alpha \in ]0,1[$ such that for every $(p,q)\in\mathbb{N}\times \mathbb N^*$ 
\begin{equation}\label{Def-Dioph}
\left|\frac{\alpha}{\pi}-\frac{p}{q}\right|\ge \frac{c_\alpha}{q^{2+\nu_\alpha}}.
\end{equation}
and that $p_n/n$ converges to $\alpha$ at some polynomial speed, namely
\begin{equation}\label{conv-dioph}
\exists c_1,c_2>0,\,\,\text{s.t.}\,\,\left|\frac{p_n}{n}-\alpha\right|\le \frac{c_1 }{n^{c_2}}.
\end{equation}
Then for any $0<\gamma<1/2$, there exists a positive constant $C$ such that for $0 \leq j \leq 2$
\begin{equation}\label{eq.unifK21}
 \sup_{s,t \in [a,b]} \sum_{|p| \leq n^{\gamma}}
\left|K_n^{(j)}\left(  p(t-s)\right)-K^{(j)}\left(  p(t-s)\right)\right|\leq \frac{C}{n^{\delta}}, \;\; \text{with} \;\; \delta = 1-2\gamma.
\end{equation}
Moreover, for all $\gamma$ such that
\[
0<\gamma < \frac{1}{3+\nu_{\alpha}} \wedge \frac{c_2}{2+\nu_{\alpha}}
\]
there exists $C>0$ such that for $0 \leq j \leq 2$
\begin{equation}\label{eq.unifK2}
 \sup_{s,t \in [a,b]} \sum_{\substack{p \neq -q \in \mathbb N \\ |p| \leq n^{\gamma}, |q| \leq n^{\gamma}}}
\left|K_n^{(j)}\left(  (p+q)p_n+ (pt+qs)\right)\right|\leq \frac{C}{n^{\kappa}}, \;\; \text{with} \;\; \kappa := 1- \gamma(3+\nu_\alpha).
\end{equation}
\end{lma}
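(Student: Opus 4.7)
The plan is to treat the two estimates \eqref{eq.unifK21} and \eqref{eq.unifK2} separately, since they rely on quite different mechanisms: the first is a direct Riemann-sum error estimate, while the second reduces to a quantitative anti-concentration for $(p+q)\alpha/2$ modulo $\pi$, which is precisely where the Diophantine hypothesis enters. For \eqref{eq.unifK21}, I would exploit that $K^{(j)}$ and $K_n^{(j)}$ arise respectively as the integral on $[0,1]$ and as the right-endpoint Riemann sum of the function $u \mapsto u^j e^{iux}$, whose derivative in $u$ is bounded by $j + |x| \leq 2 + |x|$ uniformly on $[0,1]$. The standard $C^1$ Riemann-sum error bound then yields $|K_n^{(j)}(x) - K^{(j)}(x)| \leq C(1+|x|)/n$, and plugging $x = p(t-s)$ with $|p| \leq n^\gamma$ and $s,t \in [a,b]$ gives $O(n^{\gamma-1})$ per term; summing the $O(n^\gamma)$ admissible values of $p$ yields $O(n^{2\gamma - 1})$, hence $\delta = 1-2\gamma$.

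For \eqref{eq.unifK2}, the explicit bounds \eqref{eq.KnKprimen} and \eqref{eq.Kseconden} reduce everything to a uniform lower bound on $|\sin(x/(2n))|$ where $x := (p+q)p_n + (pt+qs)$. I would decompose
\[
\frac{x}{2n} = \frac{(p+q)\alpha}{2} + \frac{p+q}{2}\left(\frac{p_n}{n} - \alpha\right) + \frac{pt+qs}{2n},
\]
and observe that for $|p|, |q| \leq n^\gamma$ the two error terms are controlled by $c_1 n^{\gamma-c_2}$ and $C n^{\gamma-1}$ respectively. Since $\alpha/\pi$ is Diophantine with exponent $\nu_\alpha$, and since $p+q \neq 0$ forces the closest integer multiple $2k\pi$ of $(p+q)\alpha$ to have $2k$ of the sign of $p+q$ (so that the Diophantine inequality indeed applies in $\mathbb{N} \times \mathbb{N}^*$), one gets
\[
\left|\sin\!\left(\tfrac{(p+q)\alpha}{2}\right)\right| \geq \frac{c}{|p+q|^{1+\nu_\alpha}} \geq c\, n^{-\gamma(1+\nu_\alpha)}.
\]

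Under the hypothesis $\gamma < c_2/(2+\nu_\alpha) \wedge 1/(3+\nu_\alpha)$, one has in particular $\gamma(2+\nu_\alpha) < c_2$ and $\gamma(1+\nu_\alpha) < 1$, so both error terms are strictly dominated by the Diophantine main term; the Lipschitz property of the sine then yields $|\sin(x/(2n))| \geq c'\, n^{-\gamma(1+\nu_\alpha)}$ for $n$ large enough, uniformly in $s,t,p,q$. Inserting this in \eqref{eq.KnKprimen} and \eqref{eq.Kseconden} and noting that, as soon as $\gamma(1+\nu_\alpha) < 1$, the leading contribution in each of those upper bounds is the $1/(n|\sin|)$ term, one obtains $|K_n^{(j)}(x)| \leq C\, n^{\gamma(1+\nu_\alpha) - 1}$ uniformly in $j \in \{0,1,2\}$. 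Summing over the at most $(2n^\gamma + 1)^2$ admissible pairs $(p,q)$ gives a total of order $n^{2\gamma + \gamma(1+\nu_\alpha) - 1} = n^{-\kappa}$ with $\kappa = 1 - \gamma(3+\nu_\alpha)$, as claimed. The main obstacle is precisely the calibration of $\gamma$: the Diophantine exponent $\nu_\alpha$ has to be absorbed both into the pointwise lower bound on the sine --- which already forces $\gamma < c_2/(2+\nu_\alpha)$ for the error from $p_n/n - \alpha$ to remain subdominant --- and into the final summation over $O(n^{2\gamma})$ terms, giving the sharper threshold $\gamma < 1/(3+\nu_\alpha)$.
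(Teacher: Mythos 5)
Your proof is correct and takes essentially the same route as the paper's: the same $C^1$ Riemann-sum error bound for \eqref{eq.unifK21}, and for \eqref{eq.unifK2} the same decomposition of the phase, the same Diophantine lower bound $|\sin((p+q)\alpha/2)|\gtrsim |p+q|^{-(1+\nu_\alpha)}$ combined with the upper bounds \eqref{eq.KnKprimen}--\eqref{eq.Kseconden}, and the same accounting of the $O(n^{2\gamma})$ pairs to obtain $\kappa=1-\gamma(3+\nu_\alpha)$. The only cosmetic difference is that you control the perturbation of the sine by its Lipschitz property rather than by the triangle inequality on the two sines, which is immaterial.
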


\begin{proof}
Let us focus on the first estimate (\ref{eq.unifK21}). Given some $f\in\mathcal{C}^1([0,1])$, we recall that
\[
\left|\frac{1}{n}\sum_{k=1}^n f\left(\frac{k}{n}\right)-\int_0^1 f(u) d
u\right| \le \frac{\|f'\|_\infty}{2 n}.
\]
For $0\leq j \leq 2$, applying the latter to $f(u)=(iu)^j e^{ i  x u}$ gives the bound
\[
\left| K_n^{(j)}(x)-K^{(j)}(x)\right|\leq \frac{ (j+|x|)}{2n}.
\]
Fix $0< \gamma <1/2$. For $|p|\le n^{\gamma}$ and $(t,s)\in [a,b]^2$, the previous bound entails that, for some $C>0$ 
\[
\left| K_n^{(j)}(p(t-s))-K^{(j)}(p(t-s))\right|\le \frac{ (j+|p (t-s)|)}{2n}\le \frac{C}{n^{1-\gamma}}.
\]
Finally, making the sum for all $|p| \leq n^{\gamma}$, one gets the estimate \eqref{eq.unifK21}
with $\kappa = 1-2\gamma$. Let us now focus on the estimate (\ref{eq.unifK2}). 
As already noted in the proof of Lemma \ref{lem.convuni2}, from Equations \eqref{eq.KnKprimen} and \eqref{eq.Kseconden}, it is here sufficient to lower bound in a polynomial way the term
\[
A_n=A_n(s,t):=n \times \left| \sin \left(\frac{ (p+q)p_n+ (pt+qs)}{2n}\right)\right|.
\]
By the triangle inequality, we have 
\[
A_n \geq n \left|\sin\left( \frac{ (p+q)\alpha}{2}\right)\right|
- n \left|\sin\left( \frac{(p+q)}{2} \left(\frac{p_n}{n}-\alpha\right) + \frac{p t +q s}{2n} \right)\right|.
\]
Due to the concavity of $\sin$ on the interval $[0,\frac{\pi}{2}]$ it follows that we have for every $x\in[0,\frac{\pi}{2}]$ that $|\sin(x)|\ge \frac{2}{\pi} |x|$. Similary, for every $x \in [\frac{\pi}{2},\pi]$ we have $|\sin(x)|\ge \frac{2}{\pi} |\pi-x|$. As a result we obtain, for every $x\in \mathbb{R}$ that $|\sin(x)|\ge \frac{2}{\pi} \text{dist}(x,\pi \mathbb{Z}).$ Taking this into account, and the Diophantine property (\ref{Def-Dioph}) of $\alpha/\pi$, we have then 
\[
\begin{array}{ll}
\displaystyle{\left|\sin\left( \frac{ (p+q)\alpha}{2}\right)\right|} & \displaystyle{\geq \frac{2}{\pi} \text{dist} \left( \frac{ (p+q)\alpha}{2}, \pi \mathbb Z\right)= (p+q)  \text{dist} \left( \frac{ \alpha}{\pi}, \frac{2}{p+q} \mathbb Z\right)}\\
\\
& \geq \displaystyle{(p+q) \times \frac{c_{\alpha}}{(p+q)^{2+\nu_\alpha}} = \frac{c_{\alpha}}{(p+q)^{1+\nu_\alpha}} }.
\end{array}
\]
We then have, for all $0<\gamma<1/2$ and  for every $|p|,|q|\le n^\gamma$ and for a constant $C$ that may change from line to line
\[
n \left|\sin\left( \frac{ (p+q)\alpha}{2}\right)\right| \geq C \times n^{1- \gamma(1+\nu_\alpha)}, \quad \text{where} \,\; 1- \gamma(1+\nu_\alpha)>0.
\]
Besides, since $p_n/n$ goes to $\alpha$ at polynomial rate $c_2$, for all $\gamma<c_2 \wedge 1/2$, we also have uniformly in $(s,t) \in [a,b]^2$
\[
n \left|\sin\left( \frac{(p+q)}{2} \left(\frac{p_n}{n}-\alpha\right) + \frac{p t +q s}{2n} \right)\right| \leq C \left( n^{1+\gamma - c_2} +n^{\gamma}\right).
\]
Therefore, for all exponent $\gamma$ such that
\[
\left \lbrace \begin{array}{l} 0<\gamma < c_2 \wedge 1/2 \\
1- \gamma(1+\nu_{\alpha}) > \gamma \\
1- \gamma(1+\nu_{\alpha})  > 1+ \gamma -c_2
 \end{array} \right. \Longleftrightarrow 0<\gamma < \frac{1}{2+\nu_{\alpha}} \wedge \frac{c_2}{2+\nu_{\alpha}},
\]
as $n$ goes to infinity, we have $A_n \geq C n^{1- \gamma(1+\nu_\alpha)} \times \left( 1+o(1) \right)$ uniformly in $s,t$. In particular, thanks to the estimates  \eqref{eq.KnKprimen} and \eqref{eq.Kseconden}, we deduce that  
\[
\left|K_n^{(j)}\left(  (p+q)p_n+ (pt+qs)\right)\right| = O \left( \frac{1}{n^{1- \gamma(1+\nu_\alpha)} } \right) .
\]
Making the sum for all $|p|,|q| \leq n^{\gamma}$, we get 
 \[
 \sup_{s,t \in [a,b]} \sum_{\substack{p \neq -q \in \mathbb N \\ |p| \leq n^{\gamma}, |q| \leq n^{\gamma}}}
\left|K_n^{(j)}\left(  (p+q)p_n+ (pt+qs)\right)\right| = O \left( \frac{1}{n^{1- \gamma(3+\nu_\alpha)} } \right).
\]
As a conclusion, we get that for all $0<\gamma < \frac{1}{3+\nu_{\alpha}} \wedge \frac{c_2}{2+\nu_{\alpha}}$, we have indeed
\[
 \sup_{s,t \in [a,b]} \sum_{\substack{p \neq -q \in \mathbb N \\ |p| \leq n^{\gamma}, |q| \leq n^{\gamma}}}
\left|K_n^{(j)}\left(  (p+q)p_n+ (pt+qs)\right)\right| = O \left( \frac{1}{n^{\kappa}} \right), \; \text{with} \;\; \kappa = 1- \gamma(3+\nu_\alpha).
\]
\end{proof}

\subsection{Qualitative estimates}
Let us first remark that decomposing $g$ and $h$ (or $g'$ and $h'$) as the sums of their positive and negative parts, it is sufficient to treat the case where $g$ and $h$ (or $g'$ and $h'$) are non negative. 
We first establish the estimate \eqref{eq.conv1}. Since $g$ and $h$ are two continuous $2\pi-$periodic functions which are piecewise $C^1$, their Fourier series  converge normally and we can write 
\[
C_{n}(s,t)  \displaystyle{= \sum_{p,q \in \mathbb Z}\widehat{g}(p) \widehat{h}(q) A_{p,q,n}(s,t)},
\]
with
\[
A_{p,q,n}(s,t)  := \displaystyle{\frac{1}{n} \sum_{k=1}^n  \exp\left( \frac{i k}{n} [ p(p_n+t)+q(p_n+s)]\right)}= K_n((p+q)p_n+pt+qs).
\]
Again, since the Fourier series of $g$ and $h$ converge normally and since $|A_{p,q,n}(s,t)| \leq 1$ uniformly in all its arguments, for any $\varepsilon>0$, there exist an integer $Q=Q_{\varepsilon}$ such that 
\[
\sup_{s,t \in [a,b]}  \left| C_{n}(s,t) - \sum_{\substack{p,q \in \mathbb Z \\ |p|,|q| \leq Q_{\varepsilon}}}\widehat{g}(p) \widehat{h}(q) A_{p,q,n}(s,t)\right| \leq \varepsilon.
\]
By Lemma \ref{lem.convuni2}, uniformly in $s,t \in [a,b]$ and $p\neq -q$ such that $|p|,|q| \leq Q_{\varepsilon}$, we have then 
\[
\displaystyle{ \lim_{n \to +\infty} A_{p,q,n}(s,t)}  = \displaystyle{ \lim_{n \to +\infty} K_n((p+q)p_n+pt+qs)  =0}. 
\]
Moreover, by Lemma \ref{lem.convuni}, uniformly in $s,t \in [a,b]$ and $|p| \leq Q_{\varepsilon}$, we have  also
\[
\displaystyle{ \lim_{n \to +\infty} A_{p,-p,n}(s,t)}  \displaystyle{= \lim_{n \to +\infty} K_n( p(t-s)) = K(p(t-s)).}
\]
Therefore, letting $n$ go to infinity, we get indeed
\[
\limsup_{n \to +\infty} \sup_{s,t \in [a,b]}  \left| C_{n}^{}(s,t) -\sum_{ p \in \mathbb Z}\widehat{g}(p) \widehat{h}(-p)   K(p(t-s))\right| \leq 2\varepsilon.
\]
Let us now concentrate on the estimate \eqref{eq.conv2}. By hypothesis, the functions $g'$ and $h'$ are $2\pi-$periodic and piecewise continuous. From the first remark at the beginning of the proof, we can also suppose without loss of generality that they are non negative. In this case, for all $\varepsilon>0$, there exists $C^{\infty}$ non negative functions  $g_{\varepsilon \pm}'$ and $h_{\varepsilon \pm}'$ such that 
\[
\begin{array}{ll}
g_{\varepsilon^-}'(t) \leq g'(t) \leq g_{\varepsilon^+}'(t) \quad \forall t \in [a,b], & \hbox{and} \quad ||g'-g_{\varepsilon {\pm}}'||_2 \leq \varepsilon, \\
\\
h_{\varepsilon^-}'(t) \leq h'(t) \leq h_{\varepsilon^+}'(t) \quad \forall t \in [a,b], & \hbox{and} \quad ||h'-h_{\varepsilon {\pm}}'||_2 \leq \varepsilon.
\end{array}
\]
and we have then 
\begin{equation}\label{eq.encadre}
 D_n^{\varepsilon-}(s,t)\leq D_n(s,t) \leq D_n^{\varepsilon+}(s,t),
\end{equation}
where 
\[
D_n^{\varepsilon \pm}(s,t)  :=\displaystyle{:= \frac{1}{n} \sum_{k=1}^n \frac{k^2}{n^2}  g_{\varepsilon \pm}' \left( \frac{k(p_n+t)}{n} \right)h_{\varepsilon \pm}' \left( \frac{k(p_n+s)}{n} \right).}
\]
Now, writing the functions $g_{\varepsilon^{\pm}}'$ and $h_{\varepsilon^{\pm}}'$ as the sum of their Fourier series, which converge normally,
\[
g_{\varepsilon^{\pm}}'(t) = \sum_{p \in \mathbb Z} \widehat{g_{\varepsilon^{\pm}}'}(p) e^{i  p t}, \quad h_{\varepsilon^{\pm}}'(t) = \sum_{p \in \mathbb Z} \widehat{h_{\varepsilon^{\pm}}'}(p) e^{i p t},
\]
we have 
\[
D_{n}^{\varepsilon \pm}(s,t)  \displaystyle{= \sum_{p,q \in \mathbb Z}\widehat{g_{\varepsilon^{\pm}}'}(p) \widehat{h_{\varepsilon^{\pm}}'}(q) B_{p,q,n}(s,t)},
\]
with 
\[
\begin{array}{ll}
B_{p,q,n}(s,t) & := \displaystyle{\frac{1}{n} \sum_{k=1}^n \frac{k^2}{n^2} \exp\left( \frac{i  k}{n} [ p(p_n+t)+q(p_n+s)]\right)}\\
\\
& = \displaystyle{- K_n''((p+q)p_n+pt+qs) }.
\end{array}
\]
As above, since the Fourier series converge normally and since $| B_{p,q,n}(s,t)| \leq 1$ uniformly in its arguments, there exists a positive integer $Q_{\varepsilon}$ such that 
\begin{equation}\label{eq.newtronc2}
\sup_{s,t \in [a,b]}  \left| D_{n}^{\varepsilon \pm}(s,t) - \sum_{\substack{p,q \in \mathbb Z \\ |p|,|q| \leq Q_{\varepsilon}}}\widehat{g_{\varepsilon^{\pm}}'}(p) \widehat{h_{\varepsilon^{\pm}}'}(q) B_{p,q,n}(s,t)\right| \leq \varepsilon.
\end{equation}
By Lemmas  \ref{lem.convuni} and \ref{lem.convuni2}, uniformly in $s,t \in [a,b]$ and $p\neq -q$ such that $|p|,|q| \leq Q_{\varepsilon}$, we have then 
\[
\displaystyle{ \lim_{n \to +\infty} B_{p,q,n}(s,t)}  =\displaystyle{  - \lim_{n \to +\infty} K_n''((p+q)p_n+pt+qs)  =0}. 
\]
\[
\displaystyle{ \lim_{n \to +\infty} B_{p,-p,n}(s,t)}  \displaystyle{= -\lim_{n \to +\infty} K_n''( p(t-s)) = - K''(p(t-s)).}
\]
Therefore, letting $n$ go to infinity, we deduce that 
\begin{equation}\label{eq.newtronc4}
\limsup_{n \to +\infty} \sup_{s,t \in [a,b]}  \left| D_{n}^{\varepsilon \pm}(s,t) + \sum_{ p \in \mathbb Z}\widehat{g_{\varepsilon^{\pm}}'}(p) \widehat{h_{\varepsilon^{\pm}}'}(-p)   K''(p(t-s))\right| \leq 2\varepsilon.
\end{equation}
Now, since $K''$ is bounded, we can upper bound the error term as follows
\[
\begin{array}{ll}
\Delta^{\varepsilon} & :=\displaystyle{ \left| \sum_{ p\in \mathbb Z}\widehat{g_{\varepsilon^{\pm}}'}(p) \widehat{h_{\varepsilon^{\pm}}'}(-p)   K''(p(t-s)) - \sum_{ p\in \mathbb Z}\widehat{g'}(p) \widehat{h'}(-p)   K(p(t-s)) \right|} \\
\\
&\displaystyle{ \leq  \sum_{p\in \mathbb Z}| \widehat{g_{\varepsilon^{\pm}}'}(p) \widehat{h_{\varepsilon^{\pm}}'}(-p) -\widehat{g'}(p) \widehat{h'}(-p)| \times | K''(p(t-s))|} \\
\\
& \leq \displaystyle{ \sum_{p\in \mathbb Z}| \widehat{g_{\varepsilon^{\pm}}'}(p) \widehat{h_{\varepsilon^{\pm}}'}(-p) -\widehat{g'}(p) \widehat{h'}(-p)| }\\
\\
& \leq \displaystyle{  \sum_{p\in \mathbb Z} | \widehat{g_{\varepsilon^{\pm}}'}(p) -\widehat{g'}(p) | \times | \widehat{h_{\varepsilon^{\pm}}'}(-p)| +|\widehat{h_{\varepsilon^{\pm}}'}(-p) -\widehat{h'}(-p) | \times |\widehat{g'}(p)|}.
\end{array}
\]
Applying twice the Cauchy-Schwarz inequality and the Parseval identity, one thus gets
\[
\begin{array}{ll}
\Delta^{\varepsilon} 
& \leq \displaystyle{   ||g_{\varepsilon^{\pm}}' -g'||_2 \times ||h_{\varepsilon^{\pm}}'||_2 +   ||h_{\varepsilon^{\pm}}' -h'||_2 \times ||g'||_2  } \\
\\
& \leq \displaystyle{ \varepsilon \left(||g'||_2+ ||h'||_2 +\varepsilon \right)}.
\end{array}
\]
This last estimate associated to Equations \eqref{eq.encadre} and \eqref{eq.newtronc4} yields that, for some explicit constant $C>0$, we have
\begin{equation}\label{eq.newtronc8}
\limsup_{n \to +\infty} \sup_{s,t \in [a,b]}  \left| D_{n}(s,t) + \sum_{ p \in \mathbb Z}\widehat{g'}(p) \widehat{h'}(-p)   K''(p(t-s))\right| \leq C \varepsilon.
\end{equation}
Remembering that $\widehat{g'}(p) = i \times p \times \widehat{g}(p)$, we get
\begin{equation}\label{eq.newtronc9}
\limsup_{n \to +\infty} \sup_{s,t \in [a,b]}  \left| D_{n}(s,t) +\sum_{ p \in \mathbb Z}\widehat{g}(p) \widehat{h}(-p) p^2  K''(p(t-s))\right| \leq C \varepsilon,
\end{equation}

hence the estimate \eqref{eq.conv2}. The proof of \eqref{eq.conv3} is totally similar and left to the reader.

\subsection{Quantitative estimates}
Let us finally focus on the quantitative estimate \eqref{eq.conv4}. Recall that the functions $g'$ and $h'$ are $2\pi-$periodic and piecewise continuous, that can be supposed non negative without loss of generality. For all integer $n>0$, there exists $C^{\infty}$ non negative functions  $g_{n^\pm}'$ and $h_{n^\pm}'$ such that for some $0<\beta<1$ to be determinate later, we have 
\[
\begin{array}{ll}
g_{n^-}'(t) \leq g'(t) \leq g_{n^+}'(t) \quad \forall t \in [a,b], & \hbox{and} \quad ||g'-g_{n {\pm}}'||_2 \leq \frac{1}{n^{\beta}}, \\
\\
h_{n^-}'(t) \leq h'(t) \leq h_{n^+}'(t) \quad \forall t \in [a,b], & \hbox{and} \quad ||h'-h_{n {\pm}}'||_2 \leq \frac{1}{n^{\beta}}.
\end{array}
\]
Note that the $j^{th}$ derivatives of $g_{n^\pm}$ then satisfy the estimates
\begin{equation}
\label{eq.borneFourier}
||  g_{n^\pm}^{(j)} ||_{\infty} = O\left(  n^{\beta j} \right), \; ||  g_{n^\pm}^{(j)} ||_{2} = O\left(  n^{\beta j} \right),
\end{equation}
and similarly for the derivatives of $h_{n^\pm}$.
As above, we have then 
\begin{equation}\label{eq.encadred}
D_n^{-}(s,t)\leq D_n(s,t) \leq D_n^{+}(s,t),
\end{equation}
where 
\[
D_n^{\pm}(s,t)  := \frac{1}{n} \sum_{k=1}^n \frac{k^2}{n^2}  g_{n^\pm} \left( \frac{k(p_n+t)}{n} \right)h_{n^\pm} \left( \frac{k(p_n+s)}{n} \right).
\]
Writing the functions $g_{n^{\pm}}$ and $h_{n^{\pm}}$ as the sum of their Fourier series, and introducing another exponent $0<\gamma<1$ to be fixed later, the term $D_n^{\pm}(s,t)$ can then be decomposed as 
\[
D_{n}^{\pm}(s,t) \displaystyle{= \sum_{p,q \in \mathbb Z}\widehat{g_{n^{\pm}}}(p) \widehat{h_{n^{\pm}}}(q) B_{p,q,n}(s,t)}=P_n(s,t) + Q_n(s,t) +R_n(s,t),
\]
with 
\[
\begin{array}{ll}
P_n(s,t) & := \displaystyle{\sum_{\substack{ p,q \in \mathbb Z \\ |p|, |q|\leq n^{\gamma} \\ p = - q}}\widehat{g_{n^{\pm}}}(p) \widehat{h_{n^{\pm}}}(q) B_{p,q,n}(s,t)}\\
\\
 Q_n(s,t) & := \displaystyle{\sum_{\substack{ p,q \in \mathbb Z \\ |p|, |q|\leq n^{\gamma}\\ p \neq -q}}\widehat{g_{n^{\pm}}}(p) \widehat{h_{n^{\pm}}}(q) B_{p,q,n}(s,t)}\\
\\R_n(s,t) & := \displaystyle{\sum_{\substack{ p,q \in \mathbb Z \\ |p|>n^{\gamma} \, \text{or}\, |q|>n^{\gamma} }}\widehat{g_{n^{\pm}}}(p) \widehat{h_{n^{\pm}}}(q) B_{p,q,n}(s,t)}.
\end{array}
\]
Let us first show that $R_n$ goes to zero at a polynomial rate as $n$ goes to infinity. We can upper bound the sum by

\[
\begin{array}{ll}
|R_n(s,t)| &\displaystyle{\ \leq \sum_{\substack{ p\in \mathbb Z \\ |q|>n^{\gamma}}}|\widehat{g_{n^{\pm}}}(p)\widehat{h_{n^{\pm}}}(q) | +\sum_{\substack{ q\in \mathbb Z \\ |p|>n^{\gamma}}}|\widehat{g_{n^{\pm}}}(p)\widehat{h_{n^{\pm}}}(q) |+\sum_{\substack{ |p|,|q|> n^{\gamma}}}|\widehat{g_{n^{\pm}}}(p)\widehat{h_{n^{\pm}}}(q) |  }\\
\\
& =:R_n^1 + R_n^2 + R_n^3.
\end{array}
\]
We have then 
\[
R_n^1 = \sum_{p\in \mathbb Z} |\widehat{g_{n^{\pm}}}(p)| \sum_{ |q|>n^{\gamma}}|\widehat{h_{n^{\pm}}}(q) |.
\]
On the one hand, writing for $p\neq0$
\[
|\widehat{g_{n^{\pm}}}(p)| = \frac{|\widehat{g_{n^{\pm}}'}(p)| }{|p|} \leq \frac{1}{2} \left( |\widehat{g_{n^{\pm}}'}(p)|^2 +\frac{1}{p^2} \right), 
\]
we get by Parseval identity and the estimates \eqref{eq.borneFourier}
\[
\sum_{p \in \mathbb Z} |\widehat{g_{n^{\pm}}}(p)|=\frac{1}{2} \sum_{p \in \mathbb Z} |\widehat{g_{n^{\pm}}'}(p)|^2 + O(1) = \frac{1}{2} ||g_{n^{\pm}}'||_2^2 + O(1) = O(n^{2\beta})+ O(1).
\]
On the other hand, for $j$ large enough, writing
\[
|\widehat{h_{n^{\pm}}^{}}(p)| = \frac{|\widehat{h_{n^{\pm}}^{(j)}}(p)| }{|p|^j} \leq \frac{O(n^{\beta j}) }{|p|^j},
\]
using again the estimate \eqref{eq.borneFourier}, we get 
\[
\sum_{ |q|>n^{\gamma}}|\widehat{h_{n^{\pm}}}(q) | \leq O(n^{\beta j} n^{-\gamma (j-1)}),
\]
so that $R_n^1 = O( n^{\beta(j+2)-\gamma (j-1)})$ and similarly for $R_n^2$ by symmetry. The last term $R_n^3$ can be treated in the same manner to get 
\[
R_n^3 = O(n^{2\beta j- 2\gamma (j-1)}).
\]
Therefore, as soon as $\beta<\gamma$, for $j$ large enough, we get that $R_n(s,t)$ goes to zero at a polynomial rate in $n$, uniformly in $s,t \in [a,b]$.
We now concentrate on the term $Q_n(s,t)$. We have
\[
|Q_n(s,t)| \leq \frac{||g_{n^{\pm}}||_{\infty} ||h_{n^{\pm}}||_{\infty}}{4\pi^2} \sum_{\substack{ p,q \in \mathbb Z \\ |p|, |q|\leq n^{\gamma}\\ p \neq -q}} | K_n''((p+q)p_n+pt+qs)|.
\]
Chossing $\gamma $ small enough, by the estimate \eqref{eq.unifK2} of Lemma \ref{lem.convuni3}, we get  that there exists $\kappa>0$ such that
\[
\sup_{s,t \in [a,b]} |Q_n(s,t)| = O(n^{-\kappa}).
\]
We are thus left with the term $P_n(s,t)$, which can be rewritten as 
\[
P_{n}(s,t) = -  \sum_{\substack{ |p|\leq n^{\gamma}}}\widehat{g_{n^{\pm}}}(p) \widehat{h_{n^{\pm}}}(-p) K_n''(p(t-s)).
\]
Let us define
\[
\begin{array}{ll}
U_n(s,t) & :=\displaystyle{- \sum_{\substack{ |p|\leq n^{\gamma}}}\widehat{g_{n^{\pm}}}(p) \widehat{h_{n^{\pm}}}(-p) K''(p(t-s))},\\
\\
V_n(s,t)& :=\displaystyle{-  \sum_{\substack{ p \in \mathbb Z}}\widehat{g_{n^{\pm}}}(p) \widehat{h_{n^{\pm}}}(-p) K''(p(t-s))},\\
\\
W(s,t)& :\displaystyle{-  \sum_{\substack{ p \in \mathbb Z}}\widehat{g}(p) \widehat{h}(-p) K''(p(t-s))}.
\end{array}
\]
We have 
\[
|P_{n}(s,t)- U_n(s,t)| \leq {|| g_{n^{\pm}}||_{\infty} || h_{n^{\pm}}||_{\infty}} \sum_{\substack{ |p|\leq n^{\gamma}}}|K_n''(p(t-s))- K''(p(t-s))| 
\]
so that, again choosing $\gamma$ small enough, by the estimate \eqref{eq.unifK21} of Lemma \ref{lem.convuni3}, we get
\[
\begin{array}{ll}
\displaystyle{\sup_{s,t \in [a,b]}|P_{n}(s,t)- U_n(s,t)| } &  \displaystyle{=  O( n^{-\kappa})}.
\end{array}
\]
Next, with the same reasoning as above, we have for all positive integer $j$
\[
\sup_{s,t \in [a,b]} |U_n(s,t) - V_n(s,t)| \leq  \sum_{\substack{ |p|> n^{\gamma}}} |\widehat{g_{n^{\pm}}}(p) \widehat{h_{n^{\pm}}}(-p) | = O\left(n^{2\beta j - \gamma (2j-1)}\right).
\]
Finally, using Cauchy--Schwarz inequality and Parseval identity, we have 
\[
\begin{array}{ll}
\displaystyle{\sup_{s,t \in [a,b]}  | V_{n}(s,t)-W(s,t) | }& \displaystyle{ = O \left(  || g_{n^{\pm}} -g||_2 ||h_{n^{\pm}}||_2 +|| h_{n^{\pm}} -h||_2 ||g||_2 \right)  =O(n^{-\beta})}.
\end{array}
\]
As a conclusion, choosing first $\gamma>0$ small enough so that the conclusion of Lemma \ref{lem.convuni3} holds, and then choosing $0<\beta<\gamma$ accordingly, we have the desired estimate.

\bibliographystyle{alpha}

\par 
\vspace{1cm}

\end{document}